\tikzset{
  norm/.style     = {shape=circle, draw},
  blue/.style     = {shape=circle, draw, fill=blue!25},
  high/.style     = {shape=circle, draw, color=red},
  bluehigh/.style = {shape=circle, draw, color=red, fill=blue!25},
  red/.style      = {shape=circle, draw, fill=red!25},
  both/.style     = {shape=circle, draw, fill=violet!35},
  root/.style     = {node, bottom color=red!30},
  env/.style      = {treenode, font=\ttfamily\normalsize},
  dummy/.style    = {circle}
}
\tikzstyle{standard}=[circle, draw=black, fill=white, very thick, minimum size=7mm]
\tikzstyle{standard2}=[circle, draw=black, fill=white, very thick]
\tikzstyle{blue2}=[circle, draw=black, fill=blue!25, very thick]
\tikzstyle{small}=[circle, draw=black, fill=black, very thick, minimum size=4mm]
\tikzstyle{special}=[circle, draw=red!60, fill=red!5, very thick, minimum size=5mm]
\newtheorem{theorem}{Theorem}[section]
\newtheorem{lemma}[theorem]{Lemma}
\theoremstyle{definition}
\newtheorem{df}[theorem]{Definition}
\newtheorem{conj}[theorem]{Conjecture}
\newtheorem{example}[theorem]{Example}
\DeclareMathOperator{\lk}{lk}
\DeclareMathOperator{\st}{st}
\DeclareMathOperator{\del}{del}
\newcommand{\Cl}{\mathsf{Cl}}
\title{Homotopy Type of Total Cut Complexes of Squared Cycle Graphs}
\author[Y.F. Shen]{Yufeng Shen}
\address{Xi’an Jiaotong University, Xi’an, Shaanxi 710049, China, yufeng\_shen@stu.xjtu.edu.cn}
\author[Z.Y. Song]{Zhiyu Song}
\address{Nankai University, Tianjin 300071, China, 2210655@mail.nankai.edu.cn}
\author[F.L. Yu]{Fenglin Yu}
\address{Peking University, Beijing 100871, China, fenglin@stu.pku.edu.cn}
\author[L.W. Zhou]{Leopold Wuhan Zhou}
\address{Peking University, Beijing 100871, China, wuhanzhou@stu.pku.edu.cn}
\author[J.Q. Zhuang]{Jingqi Zhuang}
\address{Fudan University, Shanghai 200433, China, 22300680047@m.fudan.edu.cn}
\date{\today}
\begin{document}
\subjclass{{57M15, 57Q70, 05C69, 05E45}}
\keywords{Complexes of graphs, powers of cycles,  homotopy, Morse matching }
\begin{abstract}
    In this paper, we investigate the homotopy type and combinatorial properties of \emph{total cut complexes} of squared cycle graphs. The total cut complexes are a new type of graphical complexes introduced by Bayer et al in \cite{Bayer_2024} to extend Fröberg's theorem. In \cite{Bayer_2024_02}, the authors made a conjecture on the homotopy type of total cut complexes of squared cycle graphs for $k\ge3$. We proved this conjecture in the case $k=3$. For general $k\geq 3$, we confirmed the cases when $n=3k+1$ and $3k+2$.
\end{abstract}
\maketitle
\section{Introduction}\label{sec1:intro}
In \cite{Bayer_2024}, \cite{Bayer_2025} and \cite{Bayer_2024_02}, Bayer et al. introduced two new families of graph complexes called \emph{cut complexes} and \emph{total cut complexes}. Given a graph\footnote{All the graphs in this article are assumed to be finite and simple, that is, without loops
and multiple edges. } $G=(V,E)$, the \textit{total cut complex} $\Delta_k^t(G)$ is the simplicial complex whose facets are the complements of independent sets of size $k$ in graph $G$. One of the main motivations
behind their work was a famous theorem of Ralf Fröberg \cite{Froberg_1990} connecting commutative algebra and graph theory through topology. 
\begin{theorem} [{\cite[Theorem 1]{Froberg_1990}, \cite{Eagon_1998}}] \label{thm:Fr}

	A Stanley–Reisner ideal $I_{\Delta}$ generated by quadratic square-free monomials has a $2$-linear resolution if and only if $\Delta$ is the clique complex $\Cl(G)$ of a chordal graph $G$.
\end{theorem}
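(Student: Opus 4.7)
The plan is to use Hochster's formula to translate the algebraic condition on the resolution of $I_\Delta$ into a topological vanishing condition on induced subcomplexes, then match this condition with chordality. Recall that for $S=k[x_v:v\in V]$ one has
\[
\beta_{i,j}(I_\Delta) \;=\; \sum_{\substack{W \subseteq V \\ |W|=j}} \dim_k \tilde{H}_{j-i-2}(\Delta|_W; k),
\]
so a $2$-linear resolution --- meaning $\beta_{i,j}(I_\Delta)=0$ unless $j=i+2$ --- is equivalent to the topological statement that $\tilde{H}_p(\Delta|_W; k)=0$ for every $W \subseteq V$ and every $p \geq 1$. Since $I_\Delta$ is generated by squarefree quadratics, $\Delta$ must be a flag complex, so $\Delta = \Cl(G)$ for $G$ its $1$-skeleton, and both directions reduce to equating this homological vanishing with chordality of $G$.

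For the reverse implication ($2$-linear $\Rightarrow$ chordal) I would argue contrapositively: if $G$ contains an induced cycle $C_\ell$ of length $\ell \geq 4$, then the induced subcomplex $\Delta|_{V(C_\ell)}$ is precisely $C_\ell$ as a $1$-dimensional simplicial complex (no triangle can appear, because the cycle is induced), so $\tilde{H}_1(\Delta|_{V(C_\ell)}; k) \cong k$. Substituting $W=V(C_\ell)$, $j=\ell$, and $i=\ell-3$ into Hochster's formula yields $\beta_{\ell-3,\ell}(I_\Delta) \geq 1$, which lies off the linear strand since $\ell=(\ell-3)+3\neq(\ell-3)+2$, contradicting $2$-linearity.

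For the forward implication (chordal $\Rightarrow$ $2$-linear) I would combine Dirac's theorem with the Eagon--Reiner theorem. Dirac guarantees a perfect elimination ordering $v_1,\dots,v_n$ of $V(G)$ such that each $v_i$ is simplicial in the subgraph induced on $\{v_i,\dots,v_n\}$, while Eagon--Reiner asserts that $I_\Delta$ has a linear resolution if and only if the Alexander dual $\Delta^\vee$ is Cohen--Macaulay. The strategy is to show that the perfect elimination ordering induces a shelling of the facets of $\Delta^\vee$; Cohen--Macaulayness, and hence $2$-linearity, then follows immediately.

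The main obstacle, in my view, is the forward direction: writing down an explicit shelling order on the facets of $\Delta^\vee$ derived from the elimination ordering and verifying the shelling condition on each restriction face requires careful bookkeeping. A cleaner but less constructive alternative is induction on $|V|$: remove a simplicial vertex $v$ and assemble the resolution of $I_{\Cl(G)}$ from that of $I_{\Cl(G\setminus v)}$ via a mapping cone arising from the short exact sequence involving the colon ideal $(I_{\Cl(G\setminus v)}:x_v)$; the delicate point there is verifying that this colon ideal still has a linear resolution, so that the mapping cone stays linear. In either approach, the simpliciality of $v$ (equivalently, the chord condition on $G$) is precisely what prevents spurious higher syzygies from entering the resolution.
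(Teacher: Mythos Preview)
The paper does not prove this statement at all: Theorem~\ref{thm:Fr} is quoted verbatim from Fr\"oberg~\cite{Froberg_1990} and Eagon--Reiner~\cite{Eagon_1998} purely as historical motivation for introducing cut complexes, and no argument for it appears anywhere in the text. There is therefore nothing to compare your proposal against.

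That said, your sketch is along the standard lines by which Fr\"oberg's theorem is usually established. The reverse implication via an induced long cycle is complete as written. For the forward implication you correctly identify the two customary routes (shelling the Alexander dual using a perfect elimination order, or an inductive mapping-cone argument peeling off a simplicial vertex), and you are honest that neither is fully worked out here. If you want to turn this into a self-contained proof, the shelling route is the more direct one: the facets of $\Delta^\vee$ are exactly the complements of non-edges of $G$, and ordering them by the later endpoint in a perfect elimination order (breaking ties arbitrarily) gives a vertex-decomposable, hence shellable, complex; the simpliciality of each $v_i$ in $G[\{v_i,\dots,v_n\}]$ is precisely what makes the restriction faces singletons. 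The mapping-cone approach also works but, as you note, requires checking that the colon ideal remains linearly resolved, which is another appeal to chordality of $G\setminus v$ plus a small computation.
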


Another reason why we focus on the total cut complexes of squared cycle graphs is the following. In \cite{Bayer_2024_02}, Bayer et al. identified the homotopy type of the $k$-total cut complex of $C_n$:
\begin{theorem}
\label{thm:total-cut-cycle}
(\cite{Bayer_2024_02}) For $n<2k$, $\Delta_k^t(C_n)$ is the void complex and hence shellable.
For $n\ge 2k\ge 4$,
$\Delta_k^t(C_n)$ is homotopy equivalent to a single sphere in dimension $n-2k$.
\end{theorem}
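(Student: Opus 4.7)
The case $n < 2k$ is immediate: since $\alpha(C_n)=\lfloor n/2\rfloor < k$, the cycle $C_n$ contains no independent set of size $k$, so $\Delta_k^t(C_n)$ has no facets and is the void complex, which is vacuously shellable.

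For $n \geq 2k$, the plan is to apply discrete Morse theory. Label $V(C_n)=\{1,\ldots,n\}$ cyclically and construct an acyclic element matching on the augmented face poset as follows: process vertices in the order $1,2,\ldots,n$, and in round $i$ pair each still-unmatched face $\sigma$ with $\sigma\triangle\{i\}$ whenever the latter is also a face of $\Delta_k^t(C_n)$. The goal is to show that exactly one face survives the procedure and that it has dimension $n-2k$; Forman's theorem then yields $\Delta_k^t(C_n)\simeq S^{n-2k}$. The combinatorial backbone of the analysis is the observation that, for a nonempty face $\sigma$, the complement $T=V\setminus\sigma$ decomposes into maximal arcs $T_1,\ldots,T_m$ with $\alpha(C_n[T])=\sum_j\lceil |T_j|/2\rceil$, and a vertex $v\in T_j$ lies in every maximum independent set of $C_n[T]$ exactly when $T_j$ is an odd arc and $v$ sits at an odd position of $T_j$. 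A toggle $\sigma\triangle\{v\}$ is blocked in the matching precisely when removing $v$ from $T$ would drop the independence number below $k$, which is controlled by this parity condition.

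The main obstacle is twofold: verifying acyclicity of the iterated matching, and pinning down the unique critical face. Acyclicity is expected to follow from the Cluster Lemma of Jonsson applied to the filtration of the face poset by matching rounds. For uniqueness, a parity analysis of the arcs in $T$ should show that any configuration with an even arc, or with more than one odd arc of nontrivial length, admits a matchable toggle in some round; consequently the surviving critical face must have $T$ equal to a single arc of length $2k-1$, anchored canonically by the matching order. This forces $|\sigma|=n-2k+1$, so the unique critical cell has dimension $n-2k$, and Forman's theorem completes the argument.
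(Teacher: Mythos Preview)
The theorem you are addressing is not proved in this paper at all: it is quoted verbatim from \cite{Bayer_2024_02} as background (note the citation in the theorem heading), so there is no proof in the present source to compare your proposal against.

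That said, your plan is methodologically in line with how the present paper handles its own results on $\Delta_k^t(W_n)$: a sequence of element matchings $\mathcal{M}_1,\mathcal{M}_2,\ldots$ on the face poset, appealing to Theorem~\ref{element-acyclic1} for acyclicity (you do not need the Cluster Lemma separately; the Jonsson theorem already gives acyclicity of iterated element matchings directly). Your combinatorial description of when a vertex of $T=V\setminus\sigma$ is essential---odd arc, odd position---is correct, and for small cases one checks that the unique surviving cell indeed has complement a single arc of length $2k-1$ containing the vertex $1$.

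The only genuine soft spot is that your argument for uniqueness of the critical cell is still a promise (``a parity analysis \ldots\ should show''). Concretely, you need to verify that after $\mathcal{M}_1$ every unmatched face has $\alpha(T)=k$ with $1$ lying at an odd position of an odd arc, and then that subsequent rounds successively force $2,3,\ldots$ into $\sigma$ or into the canonical arc until only the one face with $T=\{n-2k+2,\ldots,n,1\}$ remains. This bookkeeping is straightforward for $C_n$ (far simpler than the $W_n$ case worked out in Section~\ref{sec3:main}), but it does need to be written down; as stated, the proposal is a correct outline rather than a complete proof.
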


The following theorem \ref{Motivation-shen}, which is first proved by Mark Denker and Lei Xue, indicates that the total cut complex of the cycle graph and the neighborhood complex of the stable Kneser graph $SG(n,k)$ (\cite{Bjorner_2003}) have the same homotopy type.
\begin{theorem}\label{Motivation-shen}
    Let $SG(n,k)$ be the stable Kneser graph for $n\ge1,k\ge1$. The $k$-total cut complex of the $n$-cycle is a \textbf{nerve complex} of a good cover of the neighborhood complex of $SG(n,k)$, namely $\mathcal{N}(SG(n,k))$. Hence, by the nerve lemma we have $$\Delta_k^t(C_{n})\simeq\mathcal{N}(SG(n,k))$$
\end{theorem}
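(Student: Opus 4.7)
The plan is to apply the nerve lemma by constructing an explicit good cover of $\mathcal{N}(SG(n,k))$ indexed by the vertex set $V(C_n) = \{1,\ldots,n\}$, whose nerve is exactly $\Delta_k^t(C_n)$. For each $i \in V(C_n)$, I define
\[
U_i \;=\; \{\sigma \in \mathcal{N}(SG(n,k)) : i \notin S \text{ for every } S \in \sigma\},
\]
the full subcomplex of $\mathcal{N}(SG(n,k))$ induced on those stable $k$-subsets of $C_n$ that avoid $i$. This is the natural candidate: any face $\sigma$ of $\mathcal{N}(SG(n,k))$ admits a common neighbor $T$ in $SG(n,k)$ (a stable $k$-subset disjoint from every $S \in \sigma$), so for any $i \in T$ we have $\sigma \in U_i$, confirming that $\bigcup_i U_i = \mathcal{N}(SG(n,k))$.

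Next I would identify the nerve. For any $F \subseteq V(C_n)$,
\[
\bigcap_{i \in F} U_i \;=\; \{\sigma \in \mathcal{N}(SG(n,k)) : S \cap F = \emptyset \text{ for all } S \in \sigma\},
\]
which is the induced subcomplex on $\mathcal{V}_F := \{S \text{ stable } k\text{-subset of } C_n : S \cap F = \emptyset\}$. Since (for $n \geq 2k$, the non-trivial range) every stable $k$-subset has a disjoint stable $k$-subset and hence is a non-isolated vertex of $SG(n,k)$, the intersection is non-empty iff $\mathcal{V}_F \neq \emptyset$, which is precisely the defining condition for $F$ to be a face of $\Delta_k^t(C_n)$. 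Consequently the nerve coincides with $\Delta_k^t(C_n)$.

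The crux of the argument is the third ingredient: verifying that every non-empty intersection $\bigcap_{i \in F} U_i$ is contractible, so the cover is good. My approach is to exhibit a cone point $T^* = T^*(F) \in \mathcal{V}_F$ such that $\sigma \cup \{T^*\}$ is still a face of $\mathcal{N}(SG(n,k))$ for every $\sigma$ in the intersection. Small examples are instructive: for $n=6$, $k=2$, $F=\{1\}$, the pair $T^* = \{2,6\}$ of $C_n$-neighbors of $F$ is a cone point of $U_1$. The pattern suggested is to build $T^*$ from vertices of $V(C_n) \setminus F$ close to the boundary of $F$ (for example, endpoints of the arcs that make up $V(C_n) \setminus F$), so that the long arcs of $V(C_n) \setminus (F \cup T^*)$ still contain enough room to host a stable $k$-subset serving as a common neighbor of $\sigma \cup \{T^*\}$. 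The cone-point condition then reduces to the combinatorial claim that $V(C_n) \setminus \bigl(\bigcup_{S \in \sigma} S \cup T^*\bigr)$ always contains a stable $k$-subset of $C_n$.

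The main obstacle is this contractibility step: a uniform construction of $T^*(F)$ and a verification of the cone-point property requires case analysis on the arc decomposition of $V(C_n) \setminus F$ (the number of arcs and their lengths, and the interplay between $|F|$, $k$, and $n$); a discrete Morse matching, as hinted at by the paper's keywords, may substitute for an explicit cone point in awkward cases. Once this is carried out, the nerve lemma immediately yields $\Delta_k^t(C_n) \simeq \mathcal{N}(SG(n,k))$.
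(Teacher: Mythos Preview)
The paper does not contain a proof of this theorem. It is stated in the introduction, attributed to Mark Denker and Lei Xue, and used purely as motivation; no argument for it appears anywhere in Sections~2--5. There is therefore no ``paper's own proof'' against which to compare your proposal.

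On the merits of your outline: the choice of cover $U_i=\{\sigma:i\notin S\text{ for all }S\in\sigma\}$ is natural, and your verification that $\{U_i\}$ covers $\mathcal{N}(SG(n,k))$ and that its nerve is $\Delta_k^t(C_n)$ is correct. The genuine gap, as you yourself flag, is the contractibility of each nonempty finite intersection. Your cone-point heuristic (take $T^*$ to consist of $C_n$-neighbours of $F$, or more generally endpoints of the arcs of $[n]\setminus F$) is suggestive and checks out in small examples, but it is not a proof: one must show, for every face $\sigma\subseteq\mathcal{V}_F$ of the neighborhood complex, that $[n]\setminus\bigl(\bigcup_{S\in\sigma}S\cup T^*\bigr)$ still contains a stable $k$-subset. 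This is a nontrivial extremal statement whose truth depends delicately on the arc structure of $[n]\setminus F$ and on how $T^*$ is positioned, and ``case analysis'' here is not a placeholder one can wave past. Until that step is actually carried out (or replaced by a different argument, e.g.\ showing each $\bigcap_{i\in F}U_i$ is itself a cone or collapsible via an explicit matching), the proposal remains a plan rather than a proof.
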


This surprising connection suggests that the topology of total cut complexes may have some relationships with the neighborhood complexes of some graphs which are difficult to handle . Hence, it is another motivation for us to study the total cut complexes.

Specifically, we are looking forward to the topology structure of total cut complexes of more classes of graphs. The following conjecture was made in \cite{Bayer_2024_02}.

\begin{conj}[\cite{Bayer_2024_02},Conjecture 5.1]\label{conj-Wn}
    Let $W_n$ be squared cycle graph with $n$ vertices. The $(n-k-1)$-dimensional complex $\Delta_k^t(W_n)$ is the void complex if $n\le 3k-1$. Otherwise, it is homotopy equivalent to a single sphere in dimension 
\[\begin{cases} 2i+1, & n=3k+i, \ 0\le i\le k-1,\\
               2k+i, & n=4k+i, \ i\ge 0,
\end{cases} \]
\end{conj}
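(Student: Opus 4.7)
The plan is to combine the nerve lemma with discrete Morse theory. Observe that $\Delta_k^t(W_n)$ is covered by its facets $F_I = V \setminus I$, indexed by independent sets $I \in \mathcal{I}_k(W_n)$, and any nonempty intersection of facets is again a simplex (hence contractible). The nerve lemma therefore yields $\Delta_k^t(W_n) \simeq \mathcal{N}$, where $\mathcal{N}$ is the simplicial complex on vertex set $\mathcal{I}_k(W_n)$ whose faces are collections $\{I_1,\dots,I_m\}$ with $I_1\cup\cdots\cup I_m \neq V(W_n)$. The problem thus reduces to determining the homotopy type of this ``non-covering'' nerve $\mathcal{N}$.

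The next step is a combinatorial unpacking of $\mathcal{I}_k(W_n)$ and the covering condition. Independent sets of size $k$ in $W_n$ correspond to cyclic compositions $(g_1,\dots,g_k)$ of $n$ with each $g_i \ge 3$, placed at a chosen starting vertex; the excess $n - 3k$ measures how many ``gap defects'' (gaps of size $\ge 4$) can appear. When $n = 3k$, there are exactly three independent sets --- the arithmetic-progression cosets $\{a, a+3, a+6, \dots\}$ for $a = 0,1,2$ --- whose facets intersect pairwise but not triply, giving $\mathcal{N} = \partial \Delta^2 \simeq \Sph^1$ and confirming the conjecture at $i = 0$. For larger $n$, the covering condition $I_1 \cup \cdots \cup I_m = V$ becomes a hitting-set constraint on the index collection, amenable to Morse-theoretic analysis.

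The plan is then to construct an acyclic matching on the face poset of $\mathcal{N}$ with exactly two critical cells: one in dimension $0$ and one in the conjectured sphere dimension $d$. The matching rule would exploit the $\mathbb{Z}/n$-symmetry of $\mathcal{I}_k(W_n)$: pick a canonical anchor independent set $I^\ast(\sigma)$ in each face $\sigma \in \mathcal{N}$ (e.g., the lex-smallest under a fixed cyclic labelling), and match $\sigma$ with $\sigma \triangle \{J\}$ for the smallest-index partner $J$ preserving the non-covering constraint. Acyclicity would follow from a standard lexicographic potential argument. The combinatorial rigidity of cyclic gap patterns should then localize the unmatched cells to a single critical face of dimension $2i+1$ when $n = 3k+i$ with $0 \le i \le k-1$, and of dimension $2k + i$ when $n = 4k + i$.

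The main obstacle is controlling the matching across the two regimes of the conjecture, particularly at the transition $n = 4k$ where the sphere dimension jumps from $2k-1$ to $2k$. In the restricted ranges confirmed in this paper ($k = 3$ throughout, and $n \in \{3k+1, 3k+2\}$ for general $k$), the gap pattern has only one or two defects, which tightly constrains the matching and explains the tractability. For the full conjecture I anticipate needing either an equivariant Morse matching respecting the cyclic symmetry, a recursive reduction $\Delta_k^t(W_n) \rightsquigarrow \Delta_k^t(W_{n-3})$ obtained by collapsing an isolated gap-$3$ block, or an Alexander-dual-type reformulation that simplifies $\mathcal{N}$ on the side of $V(W_n)$. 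Carefully orchestrating the interaction of multiple defects around the cycle --- and verifying acyclicity in their presence --- is the technical heart of the proof.
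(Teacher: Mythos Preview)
The statement you address is a \emph{conjecture}; the paper does not prove it in full, only the case $k=3$ and, for general $k\ge 3$, the two values $n=3k+1$ and $n=3k+2$. There is thus no complete proof in the paper to compare against.

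Your nerve reformulation is correct and is a genuinely different route from the paper's. The paper works directly on $\Delta_k^t(W_n)$: after the element matching $\mathcal{M}_1$ on vertex $1$, it encodes each surviving face via its complement, decomposed into blocks $B_j=\binom{I_j}{i_j}$ anchored at a $k$-independent set containing $1$, and then pushes through a long sequence of further element matchings $\mathcal{M}_2,\mathcal{M}_3,\ldots$ with an exhaustive case analysis of which block patterns survive. Your approach instead passes to the nerve $\mathcal{N}$ of the facet cover, whose vertex set is $\mathcal{I}_k(W_n)$ and whose faces are collections of independent sets not covering $V$; your check of the base case $n=3k$ via $\mathcal{N}=\partial\Delta^2\simeq\Sph^1$ is valid.

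The gap is that everything after the nerve step is a sketch rather than a proof. The proposed rule ``match $\sigma$ with $\sigma\triangle\{J\}$ for the smallest-index partner $J$ preserving non-covering'' is not specified precisely enough to verify acyclicity or to count critical cells, and the sentence ``combinatorial rigidity of cyclic gap patterns should then localize the unmatched cells to a single critical face'' is exactly the hard assertion that needs proof. The paper's own partial results show that even for $n=3k+1$ and $n=3k+2$ the direct Morse matching requires several pages of case-by-case bookkeeping; you have given no evidence that the matching on $\mathcal{N}$ is more tractable. In particular, once $n\ge 4k$ the set $\mathcal{I}_k(W_n)$ is large and the non-covering condition is loose, so a lex-anchor scheme by itself gives no control over the number of unmatched faces. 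Until you exhibit a concrete matching on $\mathcal{N}$ and verify that it reproduces the known cases, this remains a plausible programme but not a proof.
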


In this paper, we introduce a new way to describe the faces of the total cut complexes of squared cycle graphs $\Delta_k^t(W_n)$ and partially prove Conjecture \ref{conj-Wn} for $k=3$. For general $k\geq 3$, we prove specific special cases as Theorem \ref{zjq-zwh-k=3} and Theorem \ref{zjq-syf-n=3k+1}.

\begin{theorem}\label{zjq-zwh-k=3}
    \begin{equation}
        \Delta_{3}^t(W_{n})\simeq \begin{cases}
            \mathbb{S}^{2n-17}, \quad n = 9, 10\\
            \mathbb{S}^{n-6}, \quad n \geq 11\\
        \end{cases}
    \end{equation}
\end{theorem}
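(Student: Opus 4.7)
My plan is to use discrete Morse theory on the face poset of $\Delta_3^t(W_n)$, treating the two small cases $n=9,10$ separately because their sphere dimensions $2n-17$ differ from the stable value $n-6$.

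For $n=9$, the only size-$3$ independent sets of $W_9$ are the three evenly spaced triples $\{0,3,6\}$, $\{1,4,7\}$, $\{2,5,8\}$, so $\Delta_3^t(W_9)$ is the union of three $5$-simplices whose pairwise intersections are $2$-simplices and whose triple intersection is empty. Since this is a good cover by simplices, the nerve lemma identifies $\Delta_3^t(W_9)$ with $\partial\Delta^2\simeq\mathbb{S}^1$, matching $2n-17=1$. For $n=10$, the ten size-$3$ independent sets are the cyclic rotations of the $(3,3,4)$-gap pattern; here I would either analyze the cover directly or construct a short explicit acyclic matching to obtain $\mathbb{S}^3$, taking advantage of the small size to verify the critical-cell count by hand.

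For the stable range $n\geq 11$, I would exploit the cyclic symmetry of $W_n$ together with the new description of faces introduced earlier to define a Morse matching on the augmented face poset. Fix a basepoint $0\in[n]$; for a face $F$, let $T(F)\subseteq[n]\setminus F$ denote the lex-smallest size-$3$ independent set contained in the complement, and match $F$ with $F\triangle\{v\}$, where $v$ is a canonical vertex determined by $T(F)$ but not lying in $T(F)$. Acyclicity then follows from the standard element-matching principle: the witness $T(F)$ is invariant along matching edges, so any alternating cycle would have to remain within a fixed fiber, where the matching is a single pair.

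The main obstacle, as I expect, is showing that exactly two critical cells survive: the empty face in dimension $-1$ and a single distinguished face of dimension $n-6$. Since the facets have dimension $n-4$, the matching must cancel a two-dimensional slack; I expect this slack to come from the two vertices of $[n]\setminus F$ that are adjacent in $W_n$ to an endpoint of the witness triple, whose inclusion in $F$ is immaterial. The argument would proceed by a case split on the gap pattern of $T(F)$, distinguishing tight patterns such as $(3,3,n-6)$ from looser ones, and checking in each case that the proposed rule matches every non-critical face. Once the critical-cell count is established, the discrete Morse theorem yields a CW model with one $0$-cell and one $(n-6)$-cell, forcing $\Delta_3^t(W_n)\simeq\mathbb{S}^{n-6}$.
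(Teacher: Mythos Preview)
Your treatment of $n=9$ via the nerve of the three-facet cover is clean and correct. Your $n=10$ case is deferred rather than argued, but since the paper itself defers it to the general $n=3k+1$ theorem, that is not the main concern.

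The real gap is in the $n\geq 11$ argument. You never actually define the matching: ``a canonical vertex $v$ determined by $T(F)$ but not lying in $T(F)$'' is a placeholder, not a rule. Everything downstream depends on that choice. In particular, your acyclicity argument hinges on the claim that the witness $T(F)$ is invariant under $F\mapsto F\triangle\{v\}$, but for the downward move $F\mapsto F\setminus\{v\}$ the complement acquires the new element $v$, and there is no reason the lex-smallest independent triple in $([n]\setminus F)\cup\{v\}$ should still be $T(F)$; for most natural choices of $v$ it will not be. The phrase ``standard element-matching principle'' does not rescue this: element matchings are acyclic precisely because the toggled vertex is \emph{globally fixed}, not face-dependent, and the fiber/patchwork argument you gesture at requires exactly the invariance you have not established. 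Finally, you yourself flag the critical-cell count as ``the main obstacle'' and offer only an expectation, not an argument; getting a single surviving $(n-6)$-cell is the entire content of the theorem in this range.

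For comparison, the paper takes a much more explicit and less conceptual route: it applies the sequence of element matchings $\mathcal{M}_1,\mathcal{M}_2,\ldots,\mathcal{M}_{n-1}$ in the sense of Jonsson (so acyclicity is automatic by the standard theorem on sequential element matchings), and after $\mathcal{M}_1$ it classifies the surviving faces into fifteen explicit families $A$--$P$ indexed by the shape of the complement. The subsequent matchings $\mathcal{M}_2,\ldots,\mathcal{M}_{n-6}$ are then written out pair-by-pair among these families, and a final short cleanup with $\mathcal{M}_{n-5},\ldots,\mathcal{M}_{n-1}$ leaves exactly one critical cell $K_{n-7,n-3}$ of cardinality $n-5$. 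There is no witness-triple bookkeeping and no fiber argument; the work is entirely in the explicit enumeration. If you want your approach to succeed, you would need to specify $v$ concretely, prove the invariance of $T$ under your rule (or replace it with a genuine poset map to which the Patchwork theorem applies), and then carry out the case analysis on gap patterns that you only outline.
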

\begin{theorem}\label{zjq-syf-n=3k+1}
     For $k\geq 3$, we have:
         \begin{equation}
        \Delta_k^t(W_{3k+1}) \simeq \mathbb{S}^3
    \end{equation}
    \begin{equation}
        \Delta_k^t(W_{3k+2}) \simeq \mathbb{S}^5
    \end{equation}
\end{theorem}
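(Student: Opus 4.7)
My plan is to prove both homotopy equivalences by applying discrete Morse theory to the face poset of $\Delta_k^t(W_n)$, leveraging the combinatorial description of faces introduced earlier in the paper. A subset $F \subseteq [n]$ is a face of $\Delta_k^t(W_n)$ precisely when the complement $[n]\setminus F$ contains $k$ vertices pairwise at cyclic distance at least $3$ on the $n$-cycle. For $n = 3k+1$ such an independent set has gap pattern a cyclic rotation of $(4,3,\dots,3)$, yielding exactly $3k+1$ independent sets; for $n = 3k+2$ the patterns are rotations of $(5,3,\dots,3)$ or $(4,4,3,\dots,3)$, giving $(3k+2)(k+1)/2$ in total. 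The fact that the sphere dimension in the conclusion is \emph{independent} of $k$ strongly suggests that most of the complex cancels in pairs and only a small ``core'' near the distinguished big gap(s) contributes critical cells.

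Guided by this, I will build an acyclic partial matching $\mu$ on the face poset using a first-index (``earliest toggleable vertex'') rule. Fix a cyclic order on $[n]$ starting just after the distinguished big gap, together with a small core window near that gap. For a face $F$, let $v^{\ast}(F)$ be the smallest vertex outside the core window such that $F$ and $F \triangle \{v^{\ast}(F)\}$ are both faces and the designation is stable under the toggle; pair $F$ with $F \triangle \{v^{\ast}(F)\}$, and declare $F$ critical if no such $v^{\ast}$ exists. Acyclicity will follow from the standard lowest-index argument, while correctness is checked by case analysis on the configuration of $F$ relative to the window.

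The heart of the proof is verifying that exactly two critical cells survive: one in dimension $0$ and one in dimension $d$, where $d = 3$ when $n = 3k+1$ and $d = 5$ when $n = 3k+2$. The intuition is that once vertices far from the core window are matched away, the remaining degrees of freedom live entirely inside the window determined by the big gap (one $4$-gap for $3k+1$, and either one $5$-gap or two $4$-gaps for $3k+2$), and this local structure does not grow with $k$. Forman's theorem will then give a CW model with a single $0$-cell and a single $d$-cell, and since $d \geq 3 > 1$, the attaching map is null-homotopic, giving $\Delta_k^t(W_n) \simeq \mathbb{S}^{d}$.

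The main obstacle is tuning the matching rule so that the critical set has exactly size two with a critical cell of the predicted dimension, uniformly for all $k \geq 3$. If the first-index rule is too permissive, extra critical cells will appear and must be killed by a secondary matching or a ``cluster lemma''-style composition (cf.\ Jonsson). My strategy is to first verify the rule on $k = 3$ against Theorem \ref{zjq-zwh-k=3}, where the target answers $\mathbb{S}^{3}$ and $\mathbb{S}^{5}$ are already established; then promote it to general $k$ by exploiting the combinatorial symmetry of the identical $3$-gaps away from the window---these padding vertices should play interchangeable roles and cancel in predictable pairs, so that the critical skeleton is stable as $k$ grows.
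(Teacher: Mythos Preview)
Your high-level strategy---discrete Morse theory via element matchings, with faces encoded by the gap/block structure of the complementary independent set---is exactly the paper's method, so the plan is pointed in the right direction. But the proposal stops at the outline stage: the entire content of the proof is the verification that a single nontrivial critical cell survives, and none of that verification is present. Two specific places where your sketch diverges from what actually works. First, the paper does not fix a ``core window'' in advance; it simply applies the sequential element matchings $\mathcal{M}_1,\mathcal{M}_2,\dots,\mathcal{M}_m$ of Theorem~\ref{element-acyclic1} for a \emph{fixed small} $m$ (seven matchings for $n=3k+1$, eleven for $n=3k+2$, independent of $k$), and the unique surviving face ($\{2,3,5,7\}$, resp.\ $\{2,3,5,7,9,11\}$) emerges at the end rather than being targeted from the start. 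The uniformity in $k$ comes from the block notation $\binom{I_j}{i_j}$: each $\mathcal{M}_i$ is recorded as an explicit list of pairings between block patterns, and those pairing rules are literally the same for all $k$.

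Second, your ``promote by symmetry'' step is where the plan would break. The $3$-gaps are not interchangeable once matching begins: whether a vertex can be toggled at stage $i$ depends on the full pattern $I_0,\dots,I_j$ of blocks to its left, which is why the paper is forced to track a proliferating family of residual classes $*_i(j)$ and show they annihilate in pairs over several rounds. No symmetry shortcut is visible, and your rule ``smallest $v$ outside the window with toggle-stable designation'' is neither shown to be acyclic nor shown to yield the correct critical set for even one value of $k$. Note also a circularity in your check: the $n=10$ case of Theorem~\ref{zjq-zwh-k=3} explicitly defers to the present theorem, so it cannot serve as an independent verification for $k=3$, $n=3k+1$.
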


The structure of this paper is as follows: 

\begin{itemize}
    \item Section \ref{sec2:preli} introduces some basic definitions and results related to total cut complexes and graphs in general.
    \item Section \ref{sec3:main} focuses on computing the homotopy type of $\Delta^t_k (W_n)$, the $k$-total cut complex of the squared cycles in the case when $k=3$ ( Theorem \ref{zjq-zwh-k=3}).
    \item Section \ref{mainsec2} introduces a new way to describe the faces of the total cut complexes of squared cycle graphs $\Delta_k^t(W_n)$ and proved Theorem \ref{zjq-syf-n=3k+1}.
    \item Section \ref{sec5:conclu} proposed several conjectures and questions about the homotopy types of the complexes $\Delta_k^t(W_n)$ based on our SageMath data.
\end{itemize}

\section{Preliminaries}\label{sec2:preli}
In this section, we give some basic definitions and useful results related to total cut complexes and discrete Morse matching. General references for simplicial complexes and topology are \cite{TopMeth}, \cite{Jonsson_2007} and \cite{Hatcher_2002}. 
\subsection{Simplicial Complexes}
A \textit{simplicial complex} $\Delta$ on a set $A$ is a collection of subsets of $A$ such that 
\[\sigma\in \Delta \text{ and } \tau\subseteq \sigma \Rightarrow \tau \in \Delta.\] 
The elements of $\Delta$ are its \emph{faces} or \emph{simplices}. 
If the collection of subsets is empty, we call $\Delta$ the void complex. 
The \emph{dimension} of a face $\sigma$, $\dim(\sigma)$, is one less than its cardinality. A {\em facet} of a simplicial complex is a maximal face. The dimension of a simplicial complex is the maximum of the dimensions of its simplices. The complexes we will be considering are {\em pure}, meaning that all facets are of the same dimension.

We will be using the fowlling operations.
\begin{df}\cite{Jonsson_2007}\cite{Kozlov_2008}\label{operations}
    Let $\Delta$ be a simplicial complex and $\sigma$ a face of $\Delta$.
    \begin{itemize}
       \item The \emph{link} of $\sigma$ in $\Delta$ is 
    $$\lk_{\Delta} \sigma= \{\tau \in \Delta \mid \text{$\sigma\cap \tau 
      = \varnothing$, and $\sigma\cup \tau \in \Delta$}\}.$$
\item The (closed) \emph{star} of $\sigma$ in $\Delta$ is 
     $$\st_{\Delta} \sigma= \{\tau \in \Delta \mid  
        \sigma \cup \tau \in \Delta\} .$$
\item The \emph{deletion} of $\sigma$ in $\Delta$ is 
      \[\del_{\Delta} \sigma = \{\tau \in \Delta \mid \sigma \not\subseteq
       \tau\}.\]

    \end{itemize}
\end{df}
\subsection{Discrete Morse Theory}

We present the essential definitions and key tools from discrete Morse Theory that are utilized in this paper. For more details, refer to \cite{FORMAN199890}.
\begin{df}\cite{FORMAN199890}
A partial matching in a poset $(P,\text{ \textless})$ is a subset $M \subseteq P \times P$, such that:
\begin{enumerate}
    \item[(i)] If $(a, b) \in M$, then $b \succ a$, meaning $a < b$ and there is no $c \in P$ satisfying $a < c < b$; and
    
    \item[(ii)] Every $a \in P$ is contained in at most one pair in $M$.
\end{enumerate}

If $(a,b) \in M $, we denote $ a $ by $ d(b) $ and $ b$ by $ u(a) $. A partial matching on $P$ is called \emph{acyclic} if there do not exist distinct $a_{i}\in P$, $1\leq i \leq  m$, $ m \geq 2 $ such that 
\begin{equation} \label{eq3.1}
 a_1 \ \prec \ u(a_1) \ \succ \ a_2 \ \prec \ u(a_2) \ \succ \ \ldots \ \prec \ u(a_m) \ \succ \ a_1 \  \ {\text is\ \ a\ \ cycle}.
\end{equation}

After applying an acyclic partial matching $\mathcal{M}$ on $P$, those elements which not belong to the matching are called  \emph{unmatched} or \emph{critical cells} with respect to the matching $\mathcal{M}$.  
\end{df}

We use the following type of matchings as the main tool in computing the homotopy types. 

\begin{df}\cite{Jonsson_2007,Singh_2021}\label{def:eltmatch}
Given $\Delta$ a simplicial complex and $i \in [n]$ a vertex. The \emph{element matching} on $\Delta$ using $i$ is the matching 
\[ \mathcal{M}_{i} : = \{(\sigma, \sigma\sqcup\{i\}) \mid \sigma\sqcup\{i\}\in \Delta, i \notin \sigma\}.\]
\end{df}
The following result tells us that a sequence of element matchings is always acyclic. 

\begin{theorem}\cite{Jonsson_2007}\label{element-acyclic1}
Let $\Delta$ be a simplicial complex and $\{x_1, x_2,\cdots, x_n\}$ be a subset of the vertex set of  $\Delta$. Let $\Delta_0= \Delta$ and for $i\in [n]$ define 
\begin{align*}  M(x_i)&\coloneqq\{(\sigma, \sigma\cup\{x_i\}) \mid x_i\not\in \sigma, \textrm{ and } \sigma, \sigma\cup\{x_i\}\in \Delta_{i-1}\},\\
N(x_i)&\coloneqq \{\sigma\in \Delta_{i-1} \mid \sigma\in\eta \textrm{ for some } \eta\in M(x_i)\}, \textrm{ and }\\
\Delta_i&\coloneqq\Delta_{i-1}\setminus N(x_i).
\end{align*}

Then $\bigsqcup_{i=1}^n M(x_i)$ is an acyclic matching on  $\Delta$.
\end{theorem}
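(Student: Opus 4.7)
The plan is to first verify that $\bigsqcup_{i=1}^n M(x_i)$ is a legitimate partial matching, then establish acyclicity by induction on $n$. The partial-matching property is immediate: the construction $\Delta_i=\Delta_{i-1}\setminus N(x_i)$ ensures the sets $N(x_i)$ are pairwise disjoint, and each $M(x_i)$ is itself a matching by the definition of an element matching.

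For the base case $n=1$, a single element matching $M(x_1)$ is acyclic by a direct argument: in any hypothetical cycle $\sigma_1\prec\tau_1\succ\sigma_2\prec\tau_2\succ\cdots\succ\sigma_1$, one has $\tau_j=\sigma_j\cup\{x_1\}$ and $\sigma_{j+1}=\tau_j\setminus\{y_j\}$ for some $y_j\neq x_1$ (otherwise $\sigma_{j+1}=\sigma_j$), which forces $x_1\in\sigma_{j+1}$ and contradicts the requirement $x_1\notin\sigma_{j+1}$ for $(\sigma_{j+1},\tau_{j+1})\in M(x_1)$.

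For the inductive step, I would decompose $M=M(x_1)\sqcup M'$, where $M'=\bigsqcup_{i=2}^n M(x_i)$. The key structural fact is that no face in $\Delta_1$ contains $x_1$: any $\tau\in\Delta$ with $x_1\in\tau$ satisfies $\tau\setminus\{x_1\}\in\Delta$ by the simplicial complex property, so $(\tau\setminus\{x_1\},\tau)\in M(x_1)$ and $\tau\in N(x_1)$. Now suppose a cycle exists in $M$. If all matched pairs belong to $M(x_1)$, the base case rules it out; if all belong to $M'$, every face of the cycle lies in $\Delta_1$, reducing to the inductive hypothesis applied to $\Delta_1$; in the mixed case, any pair $(\sigma_j,\tau_j)\in M(x_1)$ yields $x_1\in\sigma_{j+1}$ as in the base case, yet $(\sigma_{j+1},\tau_{j+1})\in M(x_1)$ would require $x_1\notin\sigma_{j+1}$, while $(\sigma_{j+1},\tau_{j+1})\in M'$ forces $\sigma_{j+1}\in\Delta_1$ and hence $x_1\notin\sigma_{j+1}$---a contradiction in either sub-subcase.

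The main obstacle is the ``all pairs in $M'$'' branch of the inductive step: the subposet $\Delta_1$ need not itself be a simplicial complex (it is generally not closed under taking subsets), so applying the inductive hypothesis cleanly demands either a mild strengthening of the statement to suitable graded subposets of Boolean lattices preserved under the iteration, or else a direct minimum-index analysis of cycles. The latter starts by letting $i^*=\min_j i_j$ around the cycle and observing that $x_{i^*}\in\sigma_{j+1}$ immediately after any step-$i^*$ matched pair, which forces $i_{j+1}>i^*$; extracting a full contradiction then requires carefully unwinding the iteration to show that the intermediate face $\sigma_{j+1}\setminus\{x_{i^*}\}$ cannot have survived up to the appropriate stage of the process. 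This delicate bookkeeping is the technical crux of the argument.
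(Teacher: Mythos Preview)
The paper does not supply its own proof of this theorem; it is quoted from Jonsson's book and used as a black box. So there is no in-paper argument to compare against, and your sketch must stand on its own.

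Your outline is the right shape, and you correctly isolate the real difficulty: the inductive hypothesis, as stated for simplicial complexes, does not apply to $\Delta_1$. Your proposed remedy---strengthen the statement to arbitrary set families $\mathcal{F}\subseteq 2^V$---is exactly what is needed, and the resulting stronger statement is true. But you do not carry the strengthening through, and there is a hidden snag: your mixed-case argument silently uses the simplicial-complex property. You argue that $\sigma_{j+1}\in\Delta_1$ forces $x_1\notin\sigma_{j+1}$, which rests on ``$x_1\in\tau\Rightarrow\tau\setminus\{x_1\}\in\Delta$''. For a general family $\mathcal{F}$ this fails, so once you pass to the strengthened statement the mixed case as written no longer closes. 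Your alternative minimum-index route hits the same wall: the face $\sigma_{j+1}\setminus\{x_{i^*}\}=\sigma_j\setminus\{y_j\}$ can genuinely have been matched before step $i^*$, so the immediate contradiction you hope for does not materialize.

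The fix for the strengthened statement is short once seen. Assume a cycle with some $i_j=1$; shift so that $i_1=1$. Then $x_1\in\sigma_2$, and since $y_1\neq x_1$ the element $x_1$ persists: let $j^\ast\ge 3$ be the first index with $y_{j^\ast-1}=x_1$, so $x_1\in\sigma_\ell$ for $2\le\ell\le j^\ast-1$. For each such $\ell$ one has $i_\ell\ge 2$ (else $x_1\notin\sigma_\ell$), hence $\tau_{j^\ast-1}\in\Delta_1$. But $x_1\in\tau_{j^\ast-1}$ and $\tau_{j^\ast-1}\in\Delta_1$ together say $\tau_{j^\ast-1}\setminus\{x_1\}\notin\mathcal{F}$, while $\tau_{j^\ast-1}\setminus\{x_1\}=\sigma_{j^\ast}$ lies in $\mathcal{F}$ because it is a cycle vertex. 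This contradiction handles the mixed case for arbitrary $\mathcal{F}$, after which the ``all in $M'$'' case reduces cleanly to the inductive hypothesis on $\Delta_1$. Adding this paragraph would complete your proof.
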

\begin{theorem}[\cite{Kozlov_2008},Thm.11.13]
Let $\Delta$ be a simplicial complex and $\mathcal{M} $ be an acyclic matching on the face poset of $\Delta$. Denote by $c_i$ the number of critical $i$-dimensional cells of $\Delta$ with respect to the matching $\mathcal{M}.$  Then $\Delta$ is homotopy equivalent to a cell complex $\Delta_c$ that has exactly $c_i$ cells in each dimension $i\ge 0$, with the addition of a single $0$-dimensional cell, when the empty set is also paired.

In particular, if an acyclic matching has critical cells in only one fixed dimension $i\ge 0,$ then $\Delta$ is homotopy equivalent to a wedge of $i$-dimensional spheres.
\end{theorem}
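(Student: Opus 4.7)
The plan is to deduce this classical discrete Morse statement by converting an acyclic matching into a sequence of elementary simplicial collapses; once that framework is in place, the ``in particular'' assertion becomes an immediate topological consequence of how CW cells attach.

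First, I would encode the matching geometrically by defining a directed graph on the face poset of $\Delta$: orient every covering relation $\sigma \prec \tau$ upward (from $\sigma$ to $\tau$), except for matched pairs $(d(b),b) \in \mathcal{M}$, which are instead reversed and oriented downward from $b$ to $d(b)$. The acyclicity hypothesis on $\mathcal{M}$ is precisely the requirement that this modified Hasse diagram has no directed cycle, so I can pick a linear extension in which matched pairs are processed consecutively, starting from the top of $\Delta$. Processing these pairs in order, I would invoke the standard simplicial lemma that $d(b)$ is a free face of $b$ within the subcomplex of not-yet-collapsed faces, so each matched pair can be eliminated by an elementary collapse, which preserves homotopy type. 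What remains is a CW complex $\Delta_c$ whose open cells correspond exactly to the critical simplices, together with the basepoint $0$-cell that must be retained when $\varnothing$ itself is paired (otherwise the sequence of collapses would terminate at the empty complex).

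The technical heart is checking the free-face condition at each stage. Suppose $\sigma$ properly contains $d(b)$ in the current subcomplex with $\sigma \neq b$. Then $d(b) \prec \sigma$ is an upward edge, and following the acyclic ordering one would eventually reach $b$ again by walking up through unmatched covers and down through matched pairs, producing together with the reversed edge $b \to d(b)$ a directed cycle through $d(b)$, contradicting acyclicity. The same argument simultaneously shows that the matching induced on the reduced complex remains acyclic, so an induction on the number of matched pairs closes cleanly. This is essentially Forman's original argument transported to the poset-theoretic setting used by Kozlov.

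Finally, for the ``in particular'' clause, assume all critical cells lie in a single fixed dimension $i \ge 1$, together with the basepoint $0$-cell from the empty face. Then $\Delta_c$ has no cells in dimensions $1, \ldots, i-1$, so the attaching map of each $i$-cell factors as $\mathbb{S}^{i-1} \to *$ into the basepoint and is necessarily constant. Consequently $\Delta_c$ is the one-point union of $c_i$ copies of $\mathbb{S}^i$, as claimed. The main obstacle throughout is the bookkeeping: one must track how iterated collapses interact with the remaining matched pairs and verify that no matched pair is ``orphaned'' by having its larger element collapsed before its turn. The modified-Hasse-diagram viewpoint is precisely what keeps this bookkeeping consistent, and it is the reason acyclicity is the right hypothesis rather than some stronger monotonicity condition.
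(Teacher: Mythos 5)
The paper offers no proof of this statement (it is quoted from Kozlov, Thm.\ 11.13), so your proposal must be measured against the standard discrete Morse argument, and it has a genuine gap at its central step. You claim that, after choosing a suitable linear extension, every matched pair $(d(b),b)$ can be removed by an elementary simplicial collapse, so that what remains is a complex whose cells are exactly the critical simplices. This is false in general: a sequence of elementary collapses always terminates in a \emph{subcomplex} of $\Delta$, and the critical cells of an acyclic matching need not form a subcomplex. Take $\Delta=$ the boundary of the triangle on $\{1,2,3\}$ with the acyclic matching $\mathcal{M}=\{(\{1\},\{1,2\}),\,(\{2\},\{2,3\})\}$, so the critical cells are the vertex $\{3\}$ and the edge $\{1,3\}$. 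No face of $\Delta$ is free at the outset ($\{1\}$ lies in both $\{1,2\}$ and $\{1,3\}$, and $\{2\}$ lies in both $\{1,2\}$ and $\{2,3\}$), so not a single elementary collapse can be performed, and indeed $\{3\}\cup\{1,3\}$ is not a subcomplex; yet the matching is acyclic and the theorem correctly gives $\Delta\simeq\mathbb{S}^1$. This example also refutes your ``technical heart'': the offending coface $\sigma=\{1,3\}$ of the matched face $\{1\}$ is a critical cell, and its presence produces no directed cycle in the modified Hasse diagram, so acyclicity is never contradicted --- the free-face condition simply fails. Acyclicity guarantees a consistent processing order, not collapsibility onto the critical cells.

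The correct argument runs in the opposite direction: order the simplices by a linear extension compatible with the matching and build the complex up from the bottom. When a matched pair is adjoined, the union of the two new cells deformation retracts onto its intersection with the part already built (an ``internal collapse''/elementary expansion up to homotopy), so the homotopy type is unchanged; when a critical cell is adjoined, the step is, up to homotopy, the attachment of a single cell of that dimension, with an attaching map determined by the gradient paths. The resulting $\Delta_c$ is a genuine CW complex that is in general \emph{not} a subcomplex of $\Delta$ and whose attaching maps are not the original ones --- exactly the object your collapse-only scheme cannot produce. Your final paragraph (constant attaching maps when all critical cells lie in a single dimension, hence a wedge of spheres) is fine once the main statement is in hand, but as written the main statement is not established.
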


\section{3-total cut complex of squred cycles}\label{sec3:main}
In this section, we prove the first main result (\Cref{zjq-zwh-k=3}) of this article. Throughout the section, we use the discrete Morse theory as our main tool. We aim to construct an efficient element matching. First, we recall the following definitons.
\begin{df}
    The \textit{squared cycle graph} $W_n$ is the graph with vertex set $[n]$, and edge set $\{\{i, i+1 \pmod n\}, \{i, i+2 \pmod n \}\}, i = 1, 2, \cdots, n$.
\end{df}
We label the vertices of $W_n$ by $1, 2, \cdots , n$ in cyclic order. Note if $n\leq 5$, $W_n$ is the complete graph $K_n$.

To study the $k$-total cut complex of $W_n$, we need to determine when a subset $A$ of vertices forms a face of $\Delta_k^t(W_n)$. At this point we have 
\begin{lemma}
    \label{lem:3.1}
    For $n \ge 3k+1$, any subset $A$ of $3k-2$ vertices must contain a $k$-independent set in $W_n$. Therefore, every subset of $[n]$ with cardinality at most $n-3k+2$ is a face of $\Delta_k^t(W_n)$.
\end{lemma}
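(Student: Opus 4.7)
The second assertion follows from the first by complementation: by definition of the total cut complex, $\sigma \subseteq [n]$ is a face of $\Delta_k^t(W_n)$ iff $[n] \setminus \sigma$ contains a $k$-independent set in $W_n$. If $|\sigma| \le n - 3k + 2$ then $|[n] \setminus \sigma| \ge 3k-2$, and the first claim supplies the required independent set. So the plan reduces to proving the combinatorial statement: every $A \subseteq [n]$ with $|A| = 3k - 2$ contains $k$ elements pairwise at cyclic distance $\ge 3$ in $W_n$.

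To prove this, I would list $A = \{a_1 < a_2 < \cdots < a_{3k-2}\}$ cyclically and consider the cyclic gap sequence $g_i := a_{i+1} - a_i$ (indices mod $m := 3k-2$, with $a_{m+1} := a_1 + n$), whose entries are positive and sum to $n \ge 3k + 1$. Finding a $k$-independent set in $A$ is equivalent to partitioning this cyclic gap sequence into $k$ cyclic arcs each of total sum $\ge 3$. My first tool is a greedy sweep: start at some $a_s$, and iteratively pick the next $A$-element at cyclic distance $\ge 3$ from the previous pick. Each greedy step consumes at most three elements of $A$, namely the new pick together with the at most two $A$-elements within cyclic distance $2$ that get skipped. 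After $t$ picks, at most $3t$ elements of $A$ are accounted for, forcing $t \ge \lceil (3k-2)/3 \rceil = k$.

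The main obstacle is cyclic closure: even given $\ge k$ consecutive greedy picks $v_1, \ldots, v_k$, the wrap-around from $v_k$ back to $v_1$ must also be at cyclic distance $\ge 3$. I plan to split into two regimes. If $A$ has some cyclic gap $g_i \ge 3$, I start the greedy at $a_{i+1}$, so that the wrap-around inherits that $\ge 3$ gap and cyclic independence is automatic. Otherwise every $g_i \le 2$, which forces $n \le 2(3k-2) = 6k - 4$ and in particular the number of gaps equal to $2$ is $s = n - (3k-2) \ge 3$. In this small-gap regime I argue directly that the cyclic sequence of $1$'s and $2$'s admits a partition into $k-2$ arcs of three gaps (automatically summing to $\ge 3$) and two arcs of two gaps each containing a $2$ (summing to $\ge 3$); a mod-$3$ pigeonhole on the cyclic positions of the $s \ge 3$ twos, possibly upgrading one size-$3$ arc to size $4$ to resolve residue mismatches, produces such a partition. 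This small-gap case is the genuinely technical step, since there is no single ``large gap'' to anchor the greedy closure.
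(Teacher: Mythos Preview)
Your reduction of the second assertion to the first is correct, and the large-gap case (some $g_i \ge 3$) is sound: anchoring the linear greedy at $a_{i+1}$, each step advances at most three $A$-indices, so you obtain $\lceil(3k-2)/3\rceil = k$ picks without passing $a_i$, and the wrap-around from $v_k$ back to $v_1$ contains the gap $g_i \ge 3$.

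The small-gap case is where the proposal is genuinely incomplete. You aim to partition the $m = 3k-2$ cyclic gaps into $k-2$ arcs of length three and two arcs of length two, each of the latter containing a $2$, via ``a mod-$3$ pigeonhole\ldots possibly upgrading one size-$3$ arc to size $4$.'' As literally stated this is inconsistent: replacing one size-$3$ arc by a size-$4$ arc while keeping two size-$2$ arcs gives total length $3(k-3)+4+2\cdot 2 = 3k-1$, not $3k-2$. Moreover, since $m \equiv 1 \pmod 3$, there is no cyclically consistent mod-$3$ labelling of gap positions, so it is unclear what the pigeonhole acts on. The partition you want does exist, but you have not supplied a mechanism that produces it --- and as you yourself flag, this is precisely the case where the real work lies.

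The paper takes a different and cleaner route: for $n > 3k+1$ it first deletes $n-(3k+1)$ vertices of $[n]\setminus A$ from the cycle. Contracting the ambient cycle can only shorten cyclic distances, so an independent set found in the contracted $W_{3k+1}$ lifts to one in $W_n$. After this reduction $[3k+1]\setminus A$ has exactly three vertices, and a short case split --- three consecutive, two adjacent plus one isolated, or three pairwise nonadjacent --- combined with a mod-$3$ check on the resulting squared-path segment sizes yields the $k$-independent set directly. Your greedy argument is a pleasant alternative when a large gap is available, but the paper's reduction to three missing vertices is what makes the remaining case finite and tractable; incorporating it would close the gap in your proposal.
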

\begin{proof}
    Let $A$ be a subset of $3k-2$ vertices. For $n > 3k + 1$, we can remove $n - (3k + 1)$ vertices that are not in $A$ from $W_n$ without increasing the independent number of $A$. Therefore, it suffices to show that any subset of $[3k + 1]$ with cardinality $3k - 2$ contains a $k$-independent set in $W_n$.

    If $[3k + 1] \setminus A$ is a set of three consecutive vertices, then $A$ is a squared path and contains a $k$-independent set. 

    If $[3k + 1] \setminus A$ contains two adjacent vertices and one vertex not adjacent to either, then $A$ is divided into two squared paths of cardinality $c_1$ and $c_2$, where $c_1 + c_2 = 3k - 2$. If $c_1 \equiv c_2 \equiv 2 \pmod 3$, then $A$ contains a $\frac{c_1+1}{3} + \frac{c_2+1}{3} = k$-independent set. If $c_1 \equiv 0 \pmod 3$ or $c_2 \equiv 0 \pmod 3$, then $A$ contains a $\frac{c_1}{3} + \frac{c_2+2}{3} = k$-independent set.

    If $[3k + 1] \setminus A$ are three nonadjacent vertices, then $A$ is divided into three squared paths of cardinality $c_1, c_2, c_3$, where $c_1 + c_2 + c_3 = 3k - 2$. Then there are three cases (considering the symmetry of the three parts):
    \begin{itemize}
        \item If $c_1 \equiv c_2 \equiv 0 \pmod 3$, then $A$ contains a $\frac{c_1}{3} + \frac{c_2}{3} + \frac{c_3 + 2}{3} = k$-independent set.
        \item If $c_1 \equiv c_2 \equiv 1 \pmod 3$, then $A$ contains a $\frac{c_1 + 2}{3} + \frac{c_2 - 1}{3} + \frac{c_3 + 1}{3} = k$-independent set.
        \item If $c_1 \equiv c_2 \equiv 2 \pmod 3$, then $A$ contains a $\frac{c_1 + 1}{3} + \frac{c_2 + 1}{3} + \frac{c_3}{3} = k$-independent set.
    \end{itemize}
    Therefore, we have shown that any subset of $[3k + 1]$ with cardinality $3k - 2$ contains a $k$-independent set, and thus every subset of $[n]$ with cardinality at most $n - 3k + 2$ is a face of $\Delta_k^t(W_n)$.
\end{proof}
With Lemma \ref{lem:3.1} and discrete Morse theory, we divide the faces of $\Delta_3^n(W_n)$ into several categories, thus leading to the following theorem. 
\begin{theorem}\label{Zjq-k=3}
    \begin{equation}
        \Delta_{3}^t(W_{n})\simeq \begin{cases}
            \mathbb{S}^{2n-17}, \quad n = 9, 10\\
            \mathbb{S}^{n-6}, \quad n \geq 11\\
        \end{cases}
    \end{equation}
\end{theorem}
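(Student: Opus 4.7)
The plan is to apply discrete Morse theory: construct an acyclic matching on the face poset of $\Delta_3^t(W_n)$ via a sequence of element matchings (\Cref{element-acyclic1}) and then read off the homotopy type from the critical cells via \cite[Thm.~11.13]{Kozlov_2008}. Recall that $\sigma \subseteq [n]$ is a face of $\Delta_3^t(W_n)$ iff the complement $A := [n]\setminus\sigma$ contains a $3$-independent set of $W_n$ (three vertices at pairwise cyclic distance at least $3$). By \Cref{lem:3.1}, the nontrivial face structure for $n \geq 10$ is confined to dimensions $n-7$ through $n-4$, which simplifies the analysis considerably.

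The small cases $n = 9$ and $n = 10$ admit a direct tracking argument. For $n=9$ the complex has only three facets $F_i = [9]\setminus\{i, i+3, i+6\}$, $i\in\{1,2,3\}$. Sequentially applying $M_1, M_2, M_3$, the surviving critical cells shrink first to subsets of $F_1$ meeting both $\{2,5,8\}$ and $\{3,6,9\}$, then to subsets of $\{2,3,6,9\}$ containing $2$ and meeting $\{3,6,9\}$, and finally to the single cell $\{2,3\}$ of dimension $1 = 2\cdot 9 - 17$. Since $\emptyset$ is paired with $\{1\}$ in $M_1$, the Morse theorem gives $\Delta_3^t(W_9) \simeq \mathbb{S}^1$. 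An analogous but longer tracking argument, involving the ten $3$-independent sets of $W_{10}$, handles $n = 10$ and yields a single critical cell of dimension $3 = 2\cdot 10 - 17$.

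For $n \geq 11$, I plan to apply the element matchings $M_1, M_2, \ldots$ in cyclic order. After $M_1$, the surviving critical cells are faces $\sigma$ with $1 \notin \sigma$ for which every $3$-independent set in $A = [n]\setminus\sigma$ contains vertex $1$; equivalently, $A\setminus\{1\}$ contains no $3$-independent set of $W_n$. Successive matchings impose analogous "essential vertex" conditions. A key input is a structural lemma classifying subsets $B \subseteq [n]$ that contain no $3$-independent set of $W_n$, showing that such $B$ lies essentially in short cyclic arcs (at most six consecutive vertices can appear unbroken). This classification lets us partition the surviving critical cells by the cyclic shape of their complements; within each shape class, further element matchings cancel all but one cell, leaving a single critical face of dimension $n-6$ whose complement is a canonical $5$-element set.

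The main obstacle is the combinatorial bookkeeping for $n \geq 11$: precisely identifying which cells survive successive matchings requires a detailed case analysis on the cyclic configurations of $A$. The structural lemma on subsets without $3$-independent sets is what makes this tractable, reducing the problem to a small number of canonical shape classes that behave uniformly in $n$. Once the case analysis is carried out and the unique critical cell of dimension $n-6$ is identified, the Morse theorem immediately yields $\Delta_3^t(W_n) \simeq \mathbb{S}^{n-6}$.
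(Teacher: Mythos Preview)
Your proposal is correct and follows essentially the same route as the paper: apply the element matchings $\mathcal{M}_1,\mathcal{M}_2,\ldots$ in order, use \Cref{lem:3.1} to confine the surviving cells to complements of size $3$ through $7$, classify those complements by their cyclic shape, and cancel down to a single critical cell of dimension $n-6$ (the paper's is $K_{n-7,n-3}=[n]\setminus\{1,n-7,n-5,n-3,n-1\}$). The paper carries out explicitly the ``detailed case analysis on the cyclic configurations of $A$'' that you flag as the main obstacle, naming sixteen shape classes $A_{i,j},\ldots,P_{i,j}$ and listing every pairing produced by $\mathcal{M}_2,\ldots,\mathcal{M}_{n-1}$; your ``structural lemma'' and ``shape classes'' are exactly this classification described one level up.
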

\begin{proof}
    If $n = 9$, $\Delta_{3}^t(W_{n})$ has facets $\{124578, 134679, 235689\}$ and it is easy to see this is homotopy equivalent to $\mathbb{S}^{1}$.
    
    If $n = 10$, we will show in \cref{zjq-syf-n=3k+1} that in this case $\Delta_{3}^t(W_{10}) \simeq \mathbb{S}^{3}$.

    For $n \geq 11$, we exhibit a Morse matching. Recall that each facet of $\Delta_{3}^t(W_{n})$ has $n - 3$ vertices, so $\dim \left(\Delta_{3}^t(W_{n})\right) = n - 4$.

    Frist, we apply the element matching $\mathcal{M}_1$. A face $\sigma \in \Delta_{3}^t(W_{n})$ is unmatched if and only if $1 \notin \sigma$ and $\sigma \cup \{1\} \notin \Delta_{3}^t(W_{n})$. 
    By Lemma \ref{lem:3.1}, the unmatched faces can contain $n-3, n-4, n-5, n-6$ or $n-7$ vertices.
    We can divide the unmatched faces into the following cases:
    \begin{itemize}
        \item There are $\frac{1}{2}(n-7)(n-8)$ faces of cardinality $n-3$ which are unmatched. Each of them is of the following form.
        \[
        A_{i, j} = [n] \setminus \{1, i, j\}, \quad 4 \leq i < i+3 \leq j \leq n-2.
        \]
        
        \item There are four types of unmatched faces of cardinality $n-4$. Each type contains faces that are complements of the union of $\{1\}$ and a set of two adjacent vertices and a disjoint vertice. We denote them by:
        $$B_{i, j} = [n] \setminus \{1, i, i+1, j\}, \quad i = 3, 7 \leq j \leq n-2 \text{ and } 4 \leq i < i+3 \leq j \leq n-2$$
        $$C_{i, j} = [n] \setminus \{1, i, i+2, j\}, \quad i = 2, 3 i +5 \leq j \leq n-2 \text{ and } 4 \leq i \leq n-5, i+3 \leq j \leq n-2 $$
        $$D_{i, j} = [n] \setminus \{1, i, j, j+1\}, \quad 4 \leq i < i+3 \leq j \leq n-2$$
        $$E_{i, j} = [n] \setminus \{1, i, j, j+2\}, \quad 4 \leq i < i+3 \leq j \leq n-2.$$
        
        \item The unmatched faces of cardinality $n-5$ are of six types. Each type is characterized by either three adjacent vertices $k,k+1,k+2$ or two pairs of adjacent vertices $i,i+1,j,j+1$, as shown below:
        $$F_{i, j} = [n] \setminus \{1, i, i+1, j, j+1\}, \quad i = 3, 6 \leq j \leq n-2 \text{ and } 4 \leq i \leq n-5, i+2 \leq j \leq n-2$$
        $$G_{i, j} = [n] \setminus \{1, i, i+1, i+2, j\}, \quad i = 2, 7 \leq j \leq n-2 \text{ and } 3 \leq i < i+4 \leq j \leq n-2$$
        $$H_{i, j} = [n] \setminus \{1, i, j, j+1, j+2\}, \quad 4 \leq i \leq n-5, i+2 \leq j \leq n-2$$
        $$I_{i, j} = [n] \setminus \{1, i, i+2, j, j+1\}, \quad 2 \leq i < i+4 \leq j \leq n-2$$
        $$J_{i, j} = [n] \setminus \{1, i, i+1, j, j+2\}, \quad 3 \leq i < i+3 \leq j \leq n-2$$
        $$K_{i, j} = [n] \setminus \{1, i, i+2, j, j+2\}, \quad 2 \leq i < i+3 \leq j \leq n-2.$$
        
        \item Similarly, there are four types of unmatched faces of cardinality $n-6$:
        $$L_{i, j} = [n] \setminus \{1, i, i+1, i+2, j, j+1\}, \quad i = 2, 6 \leq j \leq n-2 \text{ and } 3 \leq i < i+3 \leq j \leq n-2$$
        $$M_{i, j} = [n] \setminus \{1, i, i+1, j, j+1, j+2\}, \quad 3 \leq i < i+2 \leq j \leq n-2$$
        $$N_{i, j} = [n] \setminus \{1, i, i+2, j, j+1, j+2\}, \quad 2 \leq i < i+3 \leq j \leq n-2$$
        $$O_{i, j} = [n] \setminus \{1, i, i+1, i+2, j, j+2\}, \quad 2 \leq i < i+3 \leq j \leq n-2.$$
        \item The unmatched faces of cardinality $n-7$ are complements of the union of $\{1\}$ and two disjoint set of three adjacent vertices $k,k+1,k+2$. There are $\frac{1}{2}(n-6)(n-5)$ faces of this type:
        $$P_{i, j} = [n] \setminus \{1, i, i+1, i+2, j, j+1, j+2\}, \quad 2 \leq i < i+3 \leq j \leq n-2.$$
    \end{itemize}
    We proceed with the following matchings:
    $$\mathcal{M}_2, \mathcal{M}_3, \cdots, \mathcal{M}_{n-6}$$
    where $\mathcal{M}_k$ denotes the element matching using vertex $k\in \{2,3,..,n-6\}$. For all $2\leq k \leq n-6$, the matching $\mathcal{M}_k$ results in the following pairings:
    
        $$(A_{k+2,j},C_{k,j}),\quad 2\leq k\leq n-7,\quad k+5\leq j\leq n-2$$
        $$(B_{k+2,k+5},K_{k,k+3}),\quad 2\leq k\leq n-7$$
        $$(B_{k+1,j},G_{k,j})\quad 2\leq k \leq n-7,\quad k+5\leq j \leq n-2$$
        $$(C_{k+2,k+5},I_{k,k+4}),\quad 2\leq k \leq n-7$$
        $$(C_{k+2,k+6},K_{k,k+4}),\quad 2\leq k\leq n-8$$
        $$(D_{k+2,j},I_{k,j}),\quad 2\leq k \leq n-7,\quad k+5\leq j \leq n-2$$
        $$(E_{k+2,j},K_{k,j}),\quad 2\leq k\leq n-7,\quad k+5\leq j \leq n-2$$
        $$(F_{k+1,j},L_{k,j}),\quad 2\leq k \leq n-6, \quad k+4\leq j \leq n-2$$
        $$(F_{k+2,k+4},N_{k,k+3}),\quad 2\leq k \leq n-7$$
        $$(G_{k+1,k+5},O_{k,k+3}),\quad 2\leq k \leq n-7$$
        $$(H_{k+2,j},N_{k,j}),\quad 2\leq k \leq n-7,\quad k+4\leq j\leq n-2$$
        $$(J_{k+1,j},O_{k,j}), \quad 2\leq k \leq n-6, k+4\leq j \leq n-2$$ 
        $$(L_{k+1,k+4},P_{k,k+3}),\quad 2\leq k \leq n-6$$
        $$(M_{k+1,j},P_{k,j}),\quad 2\leq k \leq n-6, \quad k+4\leq j \leq n-2$$
    
    After these matchings, the unmatched faces are:
    $$I_{n-6,n-2}=[n] \setminus \{1,n-6,n-4,n-2,n-1\}$$
    $$K_{n-7,n-3}=[n]\setminus \{1,n-7,n-5,n-3,n-1\}$$
    $$K_{n-6,n-3}=[n]\setminus \{1,n-6,n-4,n-3,n-1\}$$
    $$K_{n-6,n-2}=[n]\setminus \{1,n-6,n-4,n-2,n\}$$
    $$K_{n-5,n-2}=[n]\setminus \{1,n-5,n-3,n-2,n\}$$
    $$N_{n-6,n-3}=[n]\setminus \{1,n-6,n-4,n-2,n\}$$
    $$N_{n-6,n-2}=[n]\setminus \{1,n-6,n-4,n-2,n-1,n\}$$
    $$N_{n-5,n-2}=[n] \setminus \{1,n-5,n-3,n-2,n-1,n\}$$
    $$O_{n-6,n-3}=[n] \setminus \{1,n-6,n-5,n-4,n-3,n-1\}$$
    $$O_{n-5,n-2}=[n] \setminus \{1,n-5,n-4,n-3,n-2,n\}$$
    $$P_{n-5,n-2}=[n] \setminus \{1,n-5,n-4,n-3,n-2,n-1,n\}$$
Now we apply matchings $\mathcal{M}_i$ for $i=n-5,...,n-1$, which gives the pairings $(I_{n-6,n-2},N_{n-6,n-3}), \\(K_{n-6,n-3},O_{n-6,n-3}),\quad (K_{n-6,n-2},N_{n-6,n-2}), \quad (K_{n-5,n-2},O_{n-5,n-2}), \quad (N_{n-5,n-2},P_{n-5,n-2})$.

By Theorem \ref{element-acyclic1} , the union of element matchings $\mathcal{M}_1\cup \cdots\cup \mathcal{M}_{n-1}$ is an acyclic matching on the face poset of the complex $\Delta_{3}^{t}(W_n)$. There is one critical cell $K_{n-7,n-3}$ and one $0$-cell(matched to the empty set). Since the set $K_{n-7,n-3}$ has cardinality $n-5$, the complex $\Delta_{3}^{t}(W_n)$ is homotopy equivalent to a sphere of dimension $n-6$.
\end{proof}
\section{The general case ($k\geq 3$)}\label{mainsec2}
In this section we will prove Theorem \ref{zjq-syf-n=3k+1}. To smoothify the dicussion of faces in the matching process, we introduce a unique representation that will be used in the proofs of the next two theorems. For any $\Delta_k^t(W_n)$, we first apply an element matching $\mathcal{M}_1$ using the vertex $1$. After matching $\mathcal{M}_1$, the unmatched faces $\sigma$ are those satisfying the follows.
    $$ 1 \notin \sigma, \sigma \in \Delta_k^t(W_n), \sigma \cup \{1\} \notin \Delta_k^t(W_n)$$
    For any such $\sigma$, we introduce the following representation.
    $$[n]\setminus \sigma = B_0 \sqcup B_1 \sqcup \cdots \sqcup B_{k-1}$$
    where for $0 \leq j \leq k-1, \{i_j\} \subseteq B_j \subseteq \{i_j, i_{j}+1, i_{j}+2\}$ and $i_0 = 1$. $\{1, i_0, \cdots, i_{k-1}\}$ is all the minimum numbers of the face that form a $k$-independent set in $W_n$, which means for any $1 \leq j \leq k-1$, $i_j \geq i_{j-1}+3$ and $i_{k-1} \leq n-2$.
    We denote $B_j$ as $\binom{I_j}{i_j}$, where $ \{0\} \subseteq I_j \subseteq \{0, 1, 2\}$ and $B_j = \{i_j + t | t \in I_j\}$. 
    
    Since $ 1 \notin \sigma, \sigma \in \Delta_k^t(W_n), \sigma \cup \{1\} \notin \Delta_k^t(W_n)$, we can classify $[n] \setminus \sigma$ as follows.

    For $0 \leq j \leq k-3$,
    \begin{itemize}
        \item If $I_j = \{0\}$, then for any $j+1 \leq t \leq k-1$, $i_t \geq i_{t-1} + 3$ and $i_{k-1} \leq n-2$ satisfy the condition.
        \item If $I_j = \{0, 1\}$, when there exists $0 \leq l \leq j-1$ such that $I_l = \{0\}$ or exists $1 \leq l \leq j$ such that $B_{l-1} = {0,2 \choose i_{l-1}}$ and $B_l = {0, 1 \choose i_{l-1}+3}$, then it is the same as the previous case (denoted as ($\star$)). Otherwise, $i_{j+1} = i_j +3$.
        \item If $I_j = \{0, 2\}$, when there exists $0 \leq l \leq j-1$ such that $I_l = \{0\}$ or exists $1 \leq l \leq j-1$ such that $B_{l-1} = {0,2 \choose i_{l-1}}$ and $B_l = {0, 1 \choose i_{l-1}+3}$, then it is the $\star$. Otherwise, $i_{j+1} = i_j +3$ or $i_{j+1} = i_j +4$.
        \item If $I_j = \{0, 1, 2\}$, when there exists $0 \leq l \leq j-1$ such that $I_l = \{0\}$ or exists $1 \leq l \leq j-1$ such that $B_{l-1} = {0,2 \choose i_{l-1}}$ and $B_l = {0, 1 \choose i_{l-1}+3}$, then it is the $\star$; when there exists $1\leq m \leq j-1$, such that $B_m={0,2 \choose i_m}$ and for any $1\leq t \leq j-1$, $i_{t+1}=i_t+3, B_t={0,2\choose i_t}$ or ${0,1,2 \choose i_t}$, then $i_{j+1}=i_{j}+3$ or $i_j+4$. Otherwise, $i_{j+1} = i_j +3$.
    \end{itemize}

    For $j = k-2$,
    \begin{itemize}
        \item If $I_{k-2} = \{0\}$, then it is the $\star$.
        \item If $I_{k-2} = \{0, 1\}$, when there exists $0 \leq l \leq k-3$ such that $I_l = \{0\}$ or exists $1 \leq l \leq j-1$ such that $B_{l-1} = {0,2 \choose i_{l-1}}$ and $B_l = {0, 1 \choose i_{l-1}+3}$, then it is the $\star$. Otherwise, $B_{k-1} = {0 \choose i_{k-2}+3}$.
        \item If $I_{k-2} = \{0, 2\}$, when there exists $0 \leq l \leq k-3$ such that $I_l = \{0\}$ or exists $1 \leq l \leq k-3$ such that $B_{l-1} = {0,2 \choose i_{l-1}}$ and $B_l = {0, 1 \choose i_{l-1}+3}$, then it is the $\star$. Otherwise, when there exists ${0, 1 \choose i_{k-2}-3}$ or ${0, 2 \choose i_{k-2}-4}$ or ${0, 1, 2 \choose i_{k-2}-4}$, then $B_{k-1} = {0 \choose i_{k-2}+3}$ or ${0, 1 \choose i_{k-2}+3}$ or ${0 \choose i_{k-2}+4}$.
        \item If $I_{k-2} = \{0, 1, 2\}$, when there exists $0 \leq l \leq k-3$ such that $I_l = \{0\}$ or exists $1 \leq l \leq k-3$ such that $B_{l-1} = {0,2 \choose i_{l-1}}$ and $B_l = {0, 1 \choose i_{l-1}+3}$, then it is the $\star$; otherwise, when there exists ${0, 1 \choose i_{k-2}-3}$ or ${0, 2 \choose i_{k-2}-4}$ or ${0, 1, 2 \choose i_{k-2}-4}$, then $B_{k-1} = {0 \choose i_{k-2}+3}$.
        
        Condition 1: there doesn't exist $0 \leq l \leq k-3$ such that $I_l = \{0\}$ or exists $1 \leq l \leq k-3$ such that $B_{l-1} = {0,2 \choose i_{l-1}}$ and $B_l = {0, 1 \choose i_{l-1}+3}$. 
        
        Condition 2: there exists $1\leq j \leq k-3$, such that $B_j={0,2\choose i_j}$ and for any $j\leq t \leq k-3$, $i_{t+1}=i_{t}+3$, $B_t={0,2 \choose i_t}$ or ${0,1,2 \choose i_{t}}$ and $B_{t+1}={0,2 \choose i_{t+1}}$ or ${0,1,2 \choose i_{t+1}}$; or else $i_{t+1}=i_t+4$, $B_t={0,2\choose i_t}$ or ${0,1,2 \choose i_t}$ and $B_{t+1}={0,2 \choose i_{t+1}}$.

        \item If $I_{k-2}=\{0,2\}$, when $B_0={0,1 \choose 1}$ and it satisfies the condition 1,2, $n-2=i_{k-2}+3$, then $B_{k-1} = {0, 1 \choose i_{k-2}+3}$ or ${0 \choose i_{k-2}+3} $ or ${0,1,2\choose i_{k-2}+3}$ or ${0,2\choose i_{k-2}+3}$.

         \item If $I_{k-2}=\{0,2\}$, when $B_0={0,1 \choose 1}$ and it satisfies the condition 1,2, $n-2=i_{k-2}+4$, then $B_{k-1} = {0, 1 \choose i_{k-2}+3}$ or ${0 \choose i_{k-2}+3} $ or ${0,2\choose i_{k-2}+4}$ or ${0 \choose i_{k-2}+4}$.
         
        \item If $I_{k-2}=\{0,1,2\}$, when $B_0={0,1\choose 1}$ and it satisfies the condition 1,2, $n-2=i_{k-2}+3$, then $B_{k-1} = {0, 1 \choose i_{k-2}+3}$ or ${0 \choose i_{k-2}+3} $ or ${0,1,2\choose i_{k-2}+3}$ or ${0,2\choose i_{k-2}+3}$.
        \item If $I_{k-2}=\{0,1,2\}$, when $B_0={0,1\choose 1}$ and it satisfies the condition 1,2, $n-2=i_{k-2}+4$, then $B_{k-1} = {0, 1 \choose i_{k-2}+3}$ or ${0 \choose i_{k-2}+4}$ or ${0 \choose i_{k-2}+3} $ or ${0,2\choose i_{k-2}+4}$.
    \end{itemize}
\begin{example}
    When $n=14$ and $k=4$, here is one unmatched face $\sigma$ after $\mathcal{M}_1$.
    $$\sigma=\{2,6,8,10,13,14\}= [n]\setminus \{1,3,4,5,7,9,11,12\}={0,2\choose 1}{0,1\choose 4}{0,2 \choose 7}{0,1\choose 11}$$
\end{example}
\begin{theorem}\label{zjq-n=3k+1}
    For $k\geq 3$, we have:
    \begin{equation}
        \Delta_k^t(W_{3k+1}) \simeq \mathbb{S}^3
    \end{equation}
\end{theorem}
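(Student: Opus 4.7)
The plan is to run a sequence of element matchings $\mathcal{M}_1, \mathcal{M}_2, \dots$ on the face poset of $\Delta_k^t(W_{3k+1})$, parallel to the strategy of Theorem \ref{Zjq-k=3}, and to arrange for exactly one critical cell of dimension $3$ to survive. The discrete Morse theorem will then give $\Delta_k^t(W_{3k+1}) \simeq \mathbb{S}^3$.

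First, I apply $\mathcal{M}_1$ using the vertex $1$ and employ the $B_j$-representation introduced earlier in this section to list the unmatched faces. For $n = 3k+1$ the cyclic-gap constraint (each of the $k$ gaps between consecutive $i_j$ is at least $3$, and they sum to $n$) forces exactly one gap of length $4$ and all others of length $3$; equivalently $i_{k-1} \in \{3k-2,\, 3k-1\}$. This rigidity severely restricts which blocks $B_j$ may be proper subsets of a full triple $\binom{0,1,2}{i_j}$, and is the structural reason the homotopy type does not depend on $k$.

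Next, I apply $\mathcal{M}_2, \mathcal{M}_3, \dots, \mathcal{M}_{n-1}$ in order. At each matching vertex $t$, an unmatched face $\sigma$ with $t \notin \sigma$ is paired with $\sigma \cup \{t\}$ whenever the latter is still a face; in the $B_j$-notation this corresponds to enlarging or splitting the block containing $t$ in a controlled way, for instance changing $\binom{0}{i_j}$ to $\binom{0,1}{i_j}$, or $\binom{0,2}{i_j}$ to $\binom{0,1,2}{i_j}$. I proceed case by case through the $B_j$-classification introduced earlier---distinguishing which blocks satisfy the ``star'' condition, whether $I_{k-2}$ is $\{0\}$, $\{0,1\}$, $\{0,2\}$ or $\{0,1,2\}$, and whether $B_0 = \binom{0,1}{1}$ triggers the extra Conditions $1$ and $2$---and build uniform pairings analogous to $(A_{k+2,j}, C_{k,j})$ and $(B_{k+1,j}, G_{k,j})$ from Theorem \ref{Zjq-k=3}. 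Theorem \ref{element-acyclic1} guarantees that the union of all these element matchings is acyclic.

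After the matchings are exhausted, only one critical cell of cardinality $4$ should remain, reflecting the single degree of freedom in placing the length-$4$ gap and choosing the exceptional non-triple block; together with the $0$-cell coming from the empty face, this produces the cell structure of $\mathbb{S}^3$. The main obstacle will be the sheer number of subcases in the $B_j$-classification, especially the ``star'' condition and the extra branches when $B_0 = \binom{0,1}{1}$ interacts with the end block $B_{k-1}$. Ensuring that every non-critical face has a unique partner, and that no new cycles appear between the matchings for different $t$, will likely require separate treatment of the two base independent-set types ($i_{k-1} = 3k-1$ versus $i_{k-1} = 3k-2$) and of the boundary blocks $B_0$ and $B_{k-1}$, where cyclic closure and the fixed value $i_0 = 1$ modify the pairing rules.
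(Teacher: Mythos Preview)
Your plan matches the paper's approach: apply $\mathcal{M}_1$, use the $B_j$-block representation for the survivors, then run further element matchings and invoke Theorem~\ref{element-acyclic1}. The paper's actual proof confirms your prediction that a single $4$-element face survives; it is $\sigma=\{2,3,5,7\}$, i.e.\ $[n]\setminus\binom{0}{1}\binom{0,2}{4}\binom{0,1,2}{8}\binom{0,1,2}{11}\cdots\binom{0,1,2}{3k-1}$.

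Two points of comparison. First, the paper does \emph{not} run matchings all the way to $\mathcal{M}_{n-1}$; it stops at $\mathcal{M}_7$, and this already suffices for every $k\ge 3$. That only a fixed, $k$-independent number of element matchings is needed is the key structural observation, and it is what makes the case analysis tractable. Second, your heuristic ``single degree of freedom in placing the length-$4$ gap'' correctly describes the base independent sets $\{i_0,\dots,i_{k-1}\}$, but it undercounts the complexity of the unmatched faces after $\mathcal{M}_1$: each block $B_j$ can independently be one of several types $\binom{0}{i_j},\binom{0,1}{i_j},\binom{0,2}{i_j},\binom{0,1,2}{i_j}$, so there are exponentially many survivors of $\mathcal{M}_1$, not a one-parameter family. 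The paper's $\mathcal{M}_2$--$\mathcal{M}_7$ each produce an explicit, itemized list of pairings (tracking families such as $*_7(1),\dots,*_8(12),*_7(13)$), and verifying that these exhaust all survivors is the entire content of the proof. Your proposal names this as ``the main obstacle'' but does not carry it out; as written it is a correct outline rather than a proof.
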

\begin{proof}
    
    When $n=3k+1(k \geq 3)$, for any $1\leq j \leq k-1$, if $i_{j-1}=3(j-1)+2$, then $i_j=3j+2$. Otherwise, $i_j=3j+1$ or $3j+2$.

    We proceed with $\mathcal{M}_{2}, \mathcal{M}_{3}, \cdots, \mathcal{M}_{7}$ in order.

    For $\mathcal{M}_2$, we have the following matchings:
    \begin{itemize}
        \item ${0,1 \choose 1}{0 \choose 4} \cdots \leftrightarrow {0 \choose 1}{0 \choose 4} \cdots (i_2 = 7, 8)$.
        \item ${0,1 \choose 1}{0, 1 \choose 4} \cdots \leftrightarrow {0 \choose 1}{0, 1 \choose 4} \cdots (i_2 = 7)$. However, we have some faces in the latter form still unmatched, denoted by $*_7(1)$.
        \item ${0,1 \choose 1}{0, 2 \choose 4} \cdots \leftrightarrow {0 \choose 1}{0, 2 \choose 4} \cdots (i_2 = 7, 8)$. Similarly, let $*_7(2), *_8(2)$ denote the unmatched faces of the latter form.
        \item ${0,1 \choose 1}{0, 1, 2 \choose 4} \cdots \leftrightarrow {0 \choose 1}{0, 1, 2 \choose 4} \cdots (i_2 = 7)$. Let $*_7(3)$ denote the unmatched faces of the latter form.
        \item ${0,1,2 \choose 1}{0 \choose 4} \cdots \leftrightarrow {0, 2 \choose 1}{0 \choose 4} \cdots (i_2 = 7, 8)$.
        \item ${0,1,2 \choose 1}{0, 1 \choose 4} \cdots \leftrightarrow {0, 2 \choose 1}{0, 1 \choose 4} \cdots (i_2 = 7)$. Let $*_7(4)$ denote the unmatched faces of the latter form.
        \item ${0,1,2 \choose 1}{0, 2 \choose 4} \cdots \leftrightarrow {0, 2 \choose 1}{0, 2 \choose 4} \cdots (i_2 = 7, 8)$.
        \item ${0,1,2 \choose 1}{0, 1, 2 \choose 4} \cdots \leftrightarrow {0, 2 \choose 1}{0, 1, 2 \choose 4} \cdots (i_2 = 7)$. Let $*_7(5)$ denote the unmatched faces of the latter form.
    \end{itemize}

    For $\mathcal{M}_3$, we have the following matchings:
    \begin{itemize}
        \item ${0, 2 \choose 1}{0, 1 \choose 4} \cdots \leftrightarrow {0 \choose 1}{0, 1 \choose 4} \cdots (i_2 = 7,8)$. And $*_7(4)$ is all matched with $*_7(1)$ in the $i_2 = 7$ case.
        \item ${0, 2 \choose 1}{0, 1, 2 \choose 4} \cdots \leftrightarrow {0 \choose 1}{0, 1, 2 \choose 4} \cdots (i_2 = 8)$. Let $*_8(5)$ denote the unmatched faces of the latter form.
        \item ${0, 2 \choose 1}{0 \choose 5} \cdots \leftrightarrow {0 \choose 1}{0 \choose 5} \cdots (i_2 = 8)$.
        \item ${0, 2 \choose 1}{0, 1 \choose 5} \cdots \leftrightarrow {0 \choose 1}{0, 1 \choose 5} \cdots (i_2 = 8)$. Let $*_8(6)$ denote the unmatched faces of the latter form.
        \item ${0, 2 \choose 1}{0, 2 \choose 5} \cdots \leftrightarrow {0 \choose 1}{0, 2 \choose 5} \cdots (i_2 = 8)$. Let $*_8(7)$ denote the unmatched faces of the latter form.
        \item ${0, 2 \choose 1}{0, 1, 2 \choose 5} \cdots \leftrightarrow {0 \choose 1}{0, 1, 2 \choose 5} \cdots (i_2 = 8)$. Let $*_8(8)$ denote the unmatched faces of the latter form.
        \item ${0, 2 \choose 1}{0, 1, 2 \choose 4} \cdots \leftrightarrow {0 \choose 1}{0, 1, 2 \choose 4} \cdots (i_2 = 7)$, i.e. $*_7(5)$ are matched with some faces of $*_7(3)$. Let $*_7(3)'$ denote the unmatched faces of $*_7(3)$.
    \end{itemize}
    In the last matching, notice that the structure of $*_7(5)$ after $i_2$ does not appear in the fourth matching of $\mathcal{M}_2$. Therefore, all faces of $*_7(5)$ can be matched.

    After $\mathcal{M}_3$, $*_7(3)'$ does not have any faces represented by ${0 \choose 1}{0, 1, 2 \choose 4}{0, 1 \choose 7} \cdots$ and ${0 \choose 1}{0, 1, 2 \choose 4}{0 \choose 7}$ $\cdots$. It consists of faces represented by ${0 \choose 1}{0, 1, 2 \choose 4}{0, 1, 2 \choose 7} \cdots$ and ${0 \choose 1}{0, 1, 2 \choose 4}{0, 2 \choose 7} \cdots$.(i.e. If we replace the $B_i$$(i\geq 2)$s of any unmatched face of $*_7(3)'$ to the bottom of ${0,1\choose 1}{0,1,2\choose 4}$ or ${0,2\choose 1}{0,1,2 \choose 4}$, then the faces created are already matched in $\mathcal{M}_1$.) We denote the latter kind as $*_7(9)$.

    For $\mathcal{M}_4$, we have the following matchings:
    \begin{itemize}
        \item ${0 \choose 1}{0, 1, 2 \choose 4} \cdots \leftrightarrow {0 \choose 1}{0, 1 \choose 5} \cdots$, i.e. all faces of $*_8(5)$ are matched with all faces of $*_8(6)$.
        \item ${0 \choose 1}{0, 1, 2 \choose 4}{0, 1, 2 \choose 7}{* \choose 11} \cdots \leftrightarrow {0 \choose 1}{0, 1, 2 \choose 5}{0, 1 \choose 8} {*\choose 11}\cdots$ (for $j \geq 3$, $I_j \neq \{0\}$), i.e. some faces of $*_7(3)'$ are matched with some faces of $*_8(8)$.
        \item ${0 \choose 1}{0, 1, 2 \choose 4}{0, 1, 2 \choose 7}{0, 1, 2 \choose 10}\cdots{0,1,2 \choose 3t+1}{* \choose 3t+5}\cdots \leftrightarrow {0 \choose 1}{0, 1, 2 \choose 5}{0, 1, 2 \choose 8} \cdots {0,1 \choose 3t+2}{* \choose 3t+5}\cdots$ (for $3 \leq t \leq k-2$ and $j \geq t+1$, $I_j \neq \{0\}$).
        \item ${0 \choose 1}{0,1,2 \choose 4}{0,1,2\choose 7}\cdots {0,1,2\choose 3k-2}\leftrightarrow {0 \choose 1}{0,1,2 \choose 5}{0,1,2 \choose 8}\cdots {0,1\choose 3k-1}$
    \end{itemize}
    The unmatched faces of $*_7(3)'$ with a beginning of ${0 \choose 1}{0, 1, 2 \choose 4}{0, 1, 2 \choose 7}$ are 
    \begin{itemize}
        \item ${0 \choose 1}{0, 1, 2 \choose 4}{0, 1, 2 \choose 7}{I_3 \choose 10}\cdots{I_t \choose 3t+1}\cdots{I_{k-1} \choose 3k-2}$, i.e. for $3 \leq t \leq k-1$, $i_t = 3t+1, I_t = \{0, 1, 2\} or \{0, 2\}$ and there must exist some $3 \leq j \leq k-1$, s.t. $I_j=\{0,2\}$. Denote these unmatched faces as $*_7(10)$.
        \item ${0 \choose 1}{0, 1, 2 \choose 4}{0, 1, 2 \choose 7}{I_3 \choose 10}\cdots{I_t \choose 3t+1}{I_{t+1} \choose 3t+5}\cdots{I_{k-1} \choose 3k-1}$, i.e. for $3 \leq j \leq t \leq k-2$, $i_j = 3j+1, I_j = \{0, 1, 2\} \text{ or } \{0, 2\}$ and there must exist some $3 \leq l \leq t$, s.t. $I_l=\{0,2\}$. Denote these unmatched faces as $*_7(11)$.
    \end{itemize}
    The unmatched faces of $*_8(8)$ are ${0 \choose 1}{0, 1, 2 \choose 5}{I_2 \choose 8}\cdots {I_{k-1}\choose 3k-1}$, i.e. for $2\leq j \leq k-1$, $I_j=\{0,1,2\}$ or $\{0,2\}$. Denote these unmatched faces as $*_8(12)$.

    For $\mathcal{M}_5$, we have the following matchings:
    \begin{itemize}
        \item ${0 \choose 1}{0, 1, 2 \choose 4}{0, 2 \choose 7} \cdots \leftrightarrow {0 \choose 1}{0, 2 \choose 4}{0, 2 \choose 7} \cdots$, i.e. all faces of $*_7(9)$ are matched with some faces of $*_7(2)$.
        \item ${0 \choose 1}{0, 1, 2 \choose 4}{0, 1, 2 \choose 7} \cdots \leftrightarrow {0 \choose 1}{0, 2 \choose 4}{0, 1, 2 \choose 7} \cdots$, i.e. all faces of $*_7(10)$ and $*_7(11)$ are matched with some faces of $*_7(2)$.
    \end{itemize}
    The unmatched faces of $*_7(2)$ are ${0 \choose 1}{0,2 \choose 4}{0,1,2 \choose 7} \cdots {0,1,2 \choose 3t+1}{I_{t+1} \choose 3t+5} \cdots {I_{k-1}\choose 3k-1}$, i.e. for $2 \leq t \leq k-2$ and $2\leq j \leq t$, $I_j=\{0,1,2\}$; $j \geq t+1$, $I_j \neq \{0\}$. Denote these unmatched faces as $*_7(13)$.
    
    For $\mathcal{M}_6$, we have the following matchings:
    \begin{itemize}
        \item ${0 \choose 1}{0, 1, 2 \choose 5} \cdots \leftrightarrow {0 \choose 1}{0, 2 \choose 5} \cdots$, i.e. all faces of $*_8(12)$ are matched with all faces of $ *_8(7)$.
    \end{itemize}
    
    For $\mathcal{M}_7$, we have the following matchings:
    \begin{itemize}
        \item ${0 \choose 1}{0,2 \choose 4}{0,1,2 \choose 7}\cdots{0,1,2 \choose 3t+1}{I_{t+1} \choose 3t+5}\cdots$ (for $2\leq t \leq k-2$, $2\leq j \leq t$, $I_j=\{0,1,2\}$; $j\geq t+1$, $I_j \neq \{0\}$)$\leftrightarrow {0 \choose 1}{0,2 \choose 4}\cdots{0,1 \choose 3t+2}{I_{t+1} \choose 3t+5}\cdots$ (If $t \geq 3$, for $2\leq j \leq t-1$, $I_{j}=\{0,1,2\}$. For $t+1\leq j$, $I_{j}\neq \{0\}$) i.e. all faces of $*_7(13)$ are matched with some faces of $*_8(2)$.
    \end{itemize}
    And the only unmatched face is $[n]\setminus {0 \choose 1}{0,2 \choose 4}{0,1,2 \choose 8}{0,1,2 \choose 11}\cdots{0,1,2 \choose 3k-1}=\{2,3,5,7\}$. By discrete Morse theory, we have $\Delta_k^t({W_{3k+1}}) \simeq \mathbb{S}^3$.
\end{proof}
The rest of this section is devoted to solving the case when $n=3k+2$ (Theorem \ref{n=3k+2}). The proof of this theorem may seem quite tedious, so We recommend that readers carefully check each step.
\begin{theorem}\label{n=3k+2}
    For $k\geq 3$, we have:
    \begin{equation}
        \Delta_k^t(W_{3k+2}) \simeq \mathbb{S}^5
    \end{equation}
\end{theorem}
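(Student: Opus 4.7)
The plan is to mirror the matching strategy used in the proof of Theorem~\ref{zjq-n=3k+1}, applying discrete Morse theory via a carefully ordered sequence of element matchings to the face poset of $\Delta_k^t(W_{3k+2})$, and reducing to a single critical cell of dimension $5$.

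First I would apply $\mathcal{M}_1$, removing vertex $1$ from consideration. As before, the unmatched faces after $\mathcal{M}_1$ admit the canonical representation $[n] \setminus \sigma = B_0 \sqcup B_1 \sqcup \cdots \sqcup B_{k-1}$ with $B_j = \binom{I_j}{i_j}$, $i_0 = 1$, $i_{k-1} \leq n-2$ and $i_j \geq i_{j-1} + 3$ (subject to the case-by-case conditions listed at the start of this section). The key difference from the $n = 3k+1$ case is that $i_{k-1} \leq 3k$ rather than $3k-1$, giving one additional unit of spacing slack; consequently the list of legal configurations for $(i_0, \ldots, i_{k-1}, I_0, \ldots, I_{k-1})$ is strictly longer and must be re-enumerated before pairing begins.

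Next I would apply the element matchings $\mathcal{M}_2, \mathcal{M}_3, \ldots, \mathcal{M}_m$ in sequence, with $m$ chosen (tentatively $m = 8$ or $9$) so that after the last matching only one critical cell remains. At each stage $t$, I would list all valid pairs $(\sigma, \sigma \cup \{t\})$ among currently unmatched faces, record the residual unmatched ones as starred collections $*_t(\ell)$ exactly as in the previous proof, and verify that these residuals can be absorbed by later matchings. The acyclicity of the whole matching follows from Theorem~\ref{element-acyclic1}, since the union of element matchings taken in order is automatically acyclic.

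Finally, I would verify that exactly one critical cell survives and that it has cardinality $6$, so that dimension $5$ emerges as expected. The anticipated critical face should be supported near vertex $1$, in analogy with $\{2,3,5,7\}$ from Theorem~\ref{zjq-n=3k+1}; a natural candidate is a six-element subset whose complement starts with $\binom{0}{1}\binom{0,2}{4}\binom{0,1,2}{\cdot}\cdots\binom{0,1,2}{\cdot}$, adjusted for the extra vertex. Applying the discrete Morse theorem from Section~\ref{sec2:preli} then yields $\Delta_k^t(W_{3k+2}) \simeq \mathbb{S}^5$.

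The principal obstacle is the combinatorial explosion of cases. The extra unit of slack in the spacing constraints spawns more types of starred collections at each intermediate stage, and the matching pairs must be chosen in a very specific order so that subsequent stages completely absorb each batch of residuals without creating alternating cycles. Identifying the correct critical face \emph{a priori} and then reverse-engineering the matching choices to isolate exactly that face is the central technical difficulty; once the shape of the critical cell is correctly guessed, the remaining work is the (lengthy but mechanical) verification that every other configuration gets paired off at some stage, which is why the authors warn the reader to check each step carefully.
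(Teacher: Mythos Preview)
Your strategy is exactly the paper's: apply the element matchings $\mathcal{M}_1,\mathcal{M}_2,\dots$ in order, track the residual starred families at each stage, and invoke Theorem~\ref{element-acyclic1} for acyclicity. So there is no conceptual divergence.

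The gap is that what you have written is a plan, not a proof. For this theorem the entire content \emph{is} the case enumeration; the strategy is obvious once one has read Theorem~\ref{zjq-n=3k+1}, and the only question is whether the bookkeeping actually closes. Two of your concrete guesses are wrong, which indicates that the enumeration has not been carried out. First, the number of matchings needed is not $8$ or $9$: the paper runs $\mathcal{M}_1$ through $\mathcal{M}_{11}$, and stages $\mathcal{M}_8$--$\mathcal{M}_{11}$ are genuinely required to kill the new residual families (e.g.\ $*_9(7)$, and the surviving pieces of $*_8(2)$) that the extra unit of slack creates. Second, your guessed shape for the critical cell is off: the surviving face has complement
\[
\binom{0}{1}\binom{0,2}{4}\binom{0,2}{8}\binom{0,1,2}{12}\cdots\binom{0,1,2}{3k},
\]
i.e.\ two $\binom{0,2}{\cdot}$ blocks before the run of $\binom{0,1,2}{\cdot}$ blocks, and the critical face itself is $\{2,3,5,7,9,11\}$. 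The extra $\binom{0,2}{8}$ block is precisely the manifestation of the additional spacing slack you mention, and getting it right is what forces the matchings out to $\mathcal{M}_{11}$. Until the full list of pairings at each stage is written down and the residuals are shown to vanish, the argument is incomplete.
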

\begin{proof}
When $n=3k+2(k \geq 5)$, for any $1\leq j \leq k-1$, if $i_{j-1}=3(j-1)+1$, then $i_j=3j+1$ or $3j+2$ or $3j+3$; if $i_{j-1}=3(j-1)+2$, then $i_j=3j+2$ or $3j+3$; if $i_{j-1}=3(j-1)+3$, then $i_{j}=3j+3$.

We proceed with $\mathcal{M}_{2}, \mathcal{M}_{3}, \cdots, \mathcal{M}_{11}$ in order.

    For $\mathcal{M}_2$, we have the following matchings:
    \begin{itemize}
        \item ${0,1 \choose 1}{0 \choose 4} \cdots \leftrightarrow {0 \choose 1}{0 \choose 4} \cdots (i_2 = 7,8,9)$.
        \item ${0,1 \choose 1}{0, 1 \choose 4} \cdots \leftrightarrow {0 \choose 1}{0, 1 \choose 4} \cdots (i_2 = 7)$. However, we have some faces in the latter form still unmatched, denoted by $*_7(1)$.
        \item ${0,1 \choose 1}{0, 2 \choose 4} \cdots \leftrightarrow {0 \choose 1}{0, 2 \choose 4} \cdots (i_2 = 7, 8)$. Similarly, let $*_7(2), *_8(2)$ denote the unmatched faces of the latter form.
        \item ${0,1 \choose 1}{0, 1, 2 \choose 4} \cdots \leftrightarrow {0 \choose 1}{0, 1, 2 \choose 4} \cdots (i_2 = 7)$. Let $*_7(3)$ denote the unmatched faces of the latter form.
        \item ${0,1,2 \choose 1}{0 \choose 4} \cdots \leftrightarrow {0, 2 \choose 1}{0 \choose 4} \cdots (i_2 = 7,8,9)$.
        \item ${0,1,2 \choose 1}{0, 1 \choose 4} \cdots \leftrightarrow {0, 2 \choose 1}{0, 1 \choose 4} \cdots (i_2 = 7)$. Let $*_7(4)$ denote the unmatched faces of the latter form.
        \item ${0,1,2 \choose 1}{0, 2 \choose 4} \cdots \leftrightarrow {0, 2 \choose 1}{0, 2 \choose 4} \cdots (i_2 = 7, 8)$.
        \item ${0,1,2 \choose 1}{0, 1, 2 \choose 4} \cdots \leftrightarrow {0, 2 \choose 1}{0, 1, 2 \choose 4} \cdots (i_2 = 7)$. Let $*_7(5)$ denote the unmatched faces of the latter form.
    \end{itemize}
    
    For $\mathcal{M}_3$, we have the following matchings:
    \begin{itemize}
        \item ${0, 2 \choose 1}{0, 1 \choose 4} \cdots \leftrightarrow {0 \choose 1}{0, 1 \choose 4} \cdots (i_2 =7, 8,9)$. And $*_7(4)$ is all matched with $*_7(1)$ in the $i_2 = 7$ case.
        \item ${0, 2 \choose 1}{0, 1, 2 \choose 4} \cdots \leftrightarrow {0 \choose 1}{0, 1, 2 \choose 4} \cdots (i_2 = 8)$. Let $*_8(5)$ denote the unmatched faces of the latter form.
        \item ${0, 2 \choose 1}{0 \choose 5} \cdots \leftrightarrow {0 \choose 1}{0 \choose 5} \cdots (i_2 = 8,9)$.
        \item ${0, 2 \choose 1}{0, 1 \choose 5} \cdots \leftrightarrow {0 \choose 1}{0, 1 \choose 5} \cdots (i_2 = 8)$. Let $*_8(6)$ denote the unmatched faces of the latter form.
        \item ${0, 2 \choose 1}{0, 2 \choose 5} \cdots \leftrightarrow {0 \choose 1}{0, 2 \choose 5} \cdots (i_2 = 8,9)$. Let $*_8(7)$ and $*_9(7)$ denote the unmatched faces of the latter form.
        \item ${0, 2 \choose 1}{0, 1, 2 \choose 5} \cdots \leftrightarrow {0 \choose 1}{0, 1, 2 \choose 5} \cdots (i_2 = 8)$. Let $*_8(8)$ denote the unmatched faces of the latter form.
        \item ${0, 2 \choose 1}{0, 1, 2 \choose 4} \cdots \leftrightarrow {0 \choose 1}{0, 1, 2 \choose 4} \cdots (i_2 = 7)$, i.e. $*_7(5)$ are matched with some faces of $*_7(3)$. Let $*_7(3)'$ denote the unmatched faces of $*_7(3)$.
    \end{itemize}

    For $\mathcal{M}_4$, we have the following matchings:
    \begin{itemize}
        \item ${0 \choose 1}{0,1,2 \choose 4} \cdots \leftrightarrow {0 \choose 1}{0,1 \choose 5} \cdots (i_2 = 8)$, i.e. $*_8(6) \leftrightarrow *_8(7)$
        \item ${0 \choose 1}{0,1,2 \choose 4} \cdots \leftrightarrow {0 \choose 1}{0,1 \choose 5} \cdots (i_2 = 9)$
        \item ${0 \choose 1}{0,2 \choose 4} \cdots \leftrightarrow {0 \choose 1}{0 \choose 6} \cdots (i_2 = 9)$
    \end{itemize}
    Consider $*_7(2)$ in $\mathcal{M}_4$, which is related to the following matchings:
    \begin{itemize}
        \item ${0 \choose 1}{0,2 \choose 4}{I_2 \choose 7}{I_3 \choose 10}\cdots {I_{k-1} \choose 3k-2} \leftrightarrow {0 \choose 1}{I_1 \choose 6}{I_2 \choose 9}{I_3 \choose 12}\cdots {0 \choose 3k}$. \\For the left side, when $j \geq 2$, $I_j = \{0,2\}$ or $\{0,1,2\}$. 
        \\For the right side, when $1 \leq j \leq k-2$, $I_j = \{0,1\}$ or $\{0,1,2\}$.
        \item ${0 \choose 1}{0,2 \choose 4}{I_2 \choose 7}\cdots {I_{t-1} \choose 3t-2}{I_t \choose 3t+2}\cdots{I_{k-1} \choose 3k-1} \leftrightarrow {0 \choose 1}{I_1 \choose 6}{I_2 \choose 9}\cdots {0,2 \choose 3t}{I_t \choose 3t+3} \cdots {I_{k-1} \choose 3k}$. 
        \\Here $3 \leq t \leq k-1$. 
        \\For the left side, $i_t=3t+2$, $i_{t-1}=3t-2$, when $2 \leq j \leq t-1$, $I_j = \{0,2\}$ or $\{0,1,2\}$ and when $t \leq j \leq k-1$, $I_j = \{0,1\}$ or $\{0,1,2\}$. \\For the right side, when $1 \leq j \leq t-2$, $I_j = \{0,1\}$ or $\{0,1,2\}$ and when $t \leq j \leq k-2$, $I_j =\{0,2\}$ or $\{0,1,2\}$ and $I_{k-1} = \{0\}$ or $\{0,1\}$ 
        \item ${0 \choose 1}{0,2 \choose 4}{I_2 \choose 7} \cdots {I_{t-1} \choose 3t-2}{I_t \choose 3t+2} \cdots {I_{m-1} \choose 3m-1}{I_m \choose 3m+3} \cdots {I_{k-1} \choose 3k} $\\$\leftrightarrow {0 \choose 1}{0 \choose 6}{I_2 \choose 9} \cdots {0,2 \choose 3t}{I_t \choose 3t+3} \cdots {I_{m-1} \choose 3m}{I_m \choose 3m+3} \cdots {I_{k-1} \choose 3k}$. \\Here $3\leq t < m\leq k-1$.\\ For the left side, $i_t=3t+2$, $i_{t-1}=3t-2$, $i_{m-1}=3m-1$, $i_m=3m+3$, when $2 \leq j \leq t-1$, $I_j = \{0,2\}$ or $\{0,1,2\}$; when $t \leq j \leq m-1$, $I_j = \{0,1\}$ or $\{0, 1, 2\}$. 
        \\For the right side, when $1 \leq j \leq t-2$, $I_j = \{0,1\}$ or $\{0,1,2\}$; when $t \leq j \leq m-2$(we don't consider this situation if $m=t+1$), $I_j = \{0,2\}$ or $\{0,1,2\}$; $I_{m-1} = \{0\}$ or $\{0,1\}$; when $m \leq j \leq k-1$, $I_j$ is the same as the left side.
        \item ${0 \choose 1}{0,2 \choose 4}{I_2 \choose 7} \cdots {I_{t-1} \choose 3t-2} {I_t \choose 3t+3} \cdots {I_{k-1} \choose 3k} \leftrightarrow {0 \choose 1}{I_1 \choose 6}{I_2 \choose 9} \cdots {0 \choose 3t} {I_t \choose 3t+3} \cdots {I_{k-1} \choose 3k}$. \\Here $3\leq t \leq k-1$. 
        \\For the left side, $i_t=3t+3$, $i_{t-1}=3t-2$, when $2\leq j \leq t-1$, $I_j=\{0,2\}$ or $\{0,1,2\}$; when $k-1\geq j \geq t$, $I_j$ can be any subset of $\{0,1,2\}$. 
        \\For the right side, when $1\leq j \leq t-2$, $I_j=\{0,1\}$ or $\{0,1,2\}$; $I_{t-1}=\{0\}$; when $k-1\geq j \geq t$, $I_{j}$ is the same as the left side.
    \end{itemize}
    Consider $*_8(2)$ in $\mathcal{M}_4$, which is related to the following matchings:
    \begin{itemize}
        \item ${0 \choose 1}{0,2 \choose 4}{I_2 \choose 8}{I_3 \choose 11} \cdots {I_{k-1} \choose 3k-1} \leftrightarrow {0 \choose 1} {0, 2 \choose 6}{I_2 \choose 9}\cdots {I_{k-1} \choose 3k}$. 
        \\For the left side, when $2 \leq j \leq k-1$, $I_j = \{0, 1\}$ or $\{0, 1, 2\}$. 
        \\For the right side, when $2 \leq  j \leq k-2$, $I_j = \{0,2\}$ or $\{0,1,2\}$; $I_{k-1}= \{0\}$ or $\{0,1\}$.
        \item ${0 \choose 1}{0,2 \choose 4}{I_2 \choose 8}\cdots {I_{t-1} \choose 3t-1}{I_t \choose 3t+3}\cdots{I_{k-1} \choose 3k} \leftrightarrow {0 \choose 1}{0,2 \choose 6}{I_2 \choose 9}\cdots {I_{t-1} \choose 3t}{I_t \choose 3t+3} \cdots {I_{k-1} \choose 3k}$. \\Here $3\leq t \leq k-1$. 
        \\For the left side, $i_t=3t+3$, $i_{t-1}=3t-1$, when $2 \leq  j \leq t-1$, $I_j = \{0,1\}$ or $\{0,1,2\}$. 
        \\For the right side, when $2\leq j \leq t-2$, $I_j=\{0,2\}$ or $\{0,1,2\}$; $I_t = \{0\}$ or $\{0,1\}$; when $t \leq j \leq k-1$, $I_j$ is the same as the left side.
    \end{itemize}
    Consider $*_7(3)'$ in $\mathcal{M}_4$, which is related to the following matchings:
    \begin{itemize}
        \item ${0 \choose 1}{0,1,2 \choose 4}{0,1,2 \choose 7} \cdots {0,1,2 \choose 3k-2} \leftrightarrow {0 \choose 1}{0,1,2 \choose 5}{0,1,2 \choose 8} \cdots {0,1 \choose 3k-1}$.
        \item ${0 \choose 1}{0,1,2 \choose 4}{I_2 \choose 7}\cdots{0,2 \choose 3t+1}\cdots{I_{k-1} \choose 3k-2} \leftrightarrow {0 \choose 1}{0,1,2 \choose 5}\cdots {I_t \choose 3t+3}\cdots{0 \choose 3k}$. 
        \\Here $2\leq t \leq k-1$. 
        \\For the left side, when $2\leq j \leq t-1$, $I_j = \{0,1,2\}$(we don't consider this situation if $t=2$); $I_t=\{0,2\}$; when $t+1 \leq j \leq k-1$, $I_j = \{0,2\}$ or $\{0,1,2\}$(we don't consider this situation if $t=k-1$). 
        \\For the right side, when $2\leq j \leq t-1$, $I_j = \{0,1,2\}$(we don't consider this situation if $t=2$); when $t \leq j \leq k-2$, $I_j = \{0,1\}$ or $\{0,1,2\}$(we don't consider this situation if $t=k-1$); $I_{k-1}=\{0\}$.
        \item ${0 \choose 1}{0,1,2 \choose 4}{0,1,2 \choose 7}\cdots {0,1,2 \choose 3t-2}{I_t \choose 3t+2}\cdots {I_{k-1} \choose 3k}\leftrightarrow {0 \choose 1}{0,1,2 \choose 5}{I_2 \choose 8}\cdots{0,1 \choose 3t-1}{I_t \choose 3t+2}\cdots{I_{k-1} \choose 3k-1}$. 
        \\Here $3\leq t \leq k-1$. 
        \\For the left side, $i_t=3t+2$, $i_{t-1}=3t-2$, when $2\leq j \leq t-1$, $I_j=\{0,1,2\}$; when $j\geq t$, $I_j$ can be any subset of $\{0,1,2\}$ that satisfies the condition of $*_7(3)'$. 
        \\For the right side, when $2\leq j \leq t-2$, $I_j=\{0,1,2\}$(we don't consider this situation if $t=3$); $I_{t-1}=\{0,1\}$; when $j\geq t$, $I_j$ is the same as the left side.
        \item ${0 \choose 1}{0,1,2 \choose 4}\cdots{0,2 \choose 3m+1} \cdots {I_{t-1} \choose 3t-2}{I_t \choose 3t+2}\cdots{I_{k-1} \choose 3k-1} \\\leftrightarrow {0 \choose 1}{0,1,2 \choose 5}\cdots{I_m \choose 3m+3}\cdots{0,2 \choose 3t}{I_t\choose 3t+3}\cdots {I_{k-1} \choose 3k}$. 
        \\Here $2\leq m <t \leq k-1$. 
        \\For the left side, $i_t=3t+2$, $i_{t-1}=3t-2$, when $2\leq j \leq m-1$, $I_j=\{0,1,2\}$(we don't consider this situation if $m=2$); when $m+1 \leq j \leq t-1$, $I_j = \{0,2\}$ or $\{0,1,2\}$(we don't consider this situation if $m=t-1$); when $t \leq j \leq k-1$, $I_j = \{0,1\}$ or $\{0,1,2\}$. 
        \\For the right side, when $2\leq j \leq m-1$, $I_j=\{0,1,2\}$(we don't consider this situation if $m=2$); when $m\leq j \leq t-2$, $I_j = \{0,1\}$ or $\{0,1,2\}$(we don't consider this situation if $m=t-1$); $I_{t-1}=\{0,2\}$; when $t \leq j \leq k-2$, $I_j = \{0,1,2\}$ or $\{0,2\}$(we don't consider this situation if $t=k-1$); $I_{k-1} = \{0\}$ or $\{0,1\}$.
        \item ${0 \choose 1}{0,1,2 \choose 4}{0,1,2 \choose 7}\cdots{0,1,2\choose 3t-2}{I_t \choose 3t+2}\cdots {I_{m-1}\choose 3m-1}{I_m \choose 3m+3}\cdots {I_{k-1}\choose 3k} 
        \\\leftrightarrow {0\choose 1}{0,1,2\choose 5}\cdots {0,1\choose 3t-1}{I_t \choose 3t+2}\cdots {I_{m-1}\choose 3m-1}{I_m\choose 3m+3}\cdots {I_{k-1}\choose 3k}$. 
        \\Here $3\leq t <m \leq k-1$. 
        \\For the left side, $i_t=3t+2$, $i_{t-1}=3t-2$, $i_m=3m+3$, $i_{m-1}=3m-1$, when $2\leq j \leq t-1$, $I_j=\{0,1,2\}$; when $t\leq j \leq m-1$, $I_j=\{0,1\}$ or $\{0,1,2\}$; when $m\leq j \leq k-1$, $I_j$ can be any subset of $\{0,1,2\}$. 
        \\For the right side, when $2\leq j \leq t-2$, $I_j=\{0,1,2\}$(we don't consider this situation if $t=3$); $I_{t-1}=\{0,1\}$; when $j\geq t$, $I_j$ is the same as the left side.
        \item ${0 \choose 1}{0,1,2\choose 4}\cdots {0,2 \choose 3l+1}\cdots {I_{t-1}\choose 3t-2}{I_t \choose 3t+2}\cdots{I_{m-1}\choose 3m-1}{I_m \choose 3m+3}\cdots {I_{k-1}\choose 3k} \\\leftrightarrow {0\choose 1}{0,1,2\choose 5}\cdots {0,1,2 \choose 3l-1}{I_l \choose 3l+3}\cdots {0,2 \choose 3t}{I_t \choose 3t+3}\cdots{I_{m-1}\choose 3m}{I_m \choose 3m+3}\cdots {I_{k-1}\choose 3k}$. 
        \\Here $2 \leq l <t<m \leq k-1$. \\For the left side, $i_t=3t+2$, $i_{t-1}=3t-2$, $i_{m-1}=3m-1$, $i_m=3m+3$, when $2\leq j\leq l-1$, $I_j=\{0,1,2\}$(we don't consider this situation if $l=2$); $I_l=\{0,2\}$; when $l+1\leq j \leq t-1$, $I_j=\{0,1,2\}$ or $\{0,2\}$(we don't consider this situation if $t=l+1$); when $t\leq j \leq m-1$, $I_j=\{0,1\}$ or $\{0,1,2\}$; when $j\geq m$, $I_j$ can be any subset of $\{0,1,2\}$. 
        \\For the right side, when $1\leq j \leq l-1$, $I_j=\{0,1,2\}$; when $l\leq j \leq t-2$, $I_j=\{0,1\}$ or $\{0,1,2\}$(we don't consider this situation if $t=l+1$); $I_{t-1}=\{0,2\}$; when $t\leq j \leq m-2$, $I_j=\{0,1,2\}$ or $\{0,2\}$; $I_{m-1}=\{0\}$ or $\{0,1\}$; when $m\leq j \leq k-1$, $I_j$ is the same as the left side.
        \item ${0 \choose 1}{0,1,2 \choose 4}\cdots {0,1,2 \choose 3t-2}{I_t \choose 3t+3}\cdots {I_{k-1}\choose 3k}\leftrightarrow {0 \choose 1}{0,1,2 \choose 5}\cdots {0,1 \choose 3t-1}{I_t \choose 3t+3}\cdots {I_{k-1}\choose 3k}$. \\Here $3\leq t\leq k-1$. 
        \\For the left side, $i_t=3t+3$, $i_{t-1}=3t-2$, when $2\leq j \leq t-1$, $I_j=\{0,1,2\}$; when $t \leq j \leq k-1$, $I_j$ can be any subset of $\{0,1,2\}$. 
        \\For the right side, when $1\leq j \leq t-2$, $I_j=\{0,1,2\}$; $I_{t-1}=\{0,1\}$; when $t\leq j \leq k-1$, $I_j$ is the same as the left side.
        \item ${0 \choose 1}{0,1,2 \choose 4}\cdots{0,2 \choose 3m+1}\cdots{I_{t-1}\choose 3t-2}{I_t \choose 3t+3}\cdots {I_{k-
        1}\choose 3k} \\\leftrightarrow {0\choose 1}{0,1,2\choose 5}\cdots {0,1,2 \choose 3m-1}{I_m \choose 3m+3}\cdots {0 \choose 3t}{I_t\choose 3t+3} \cdots {I_{k-1}\choose 3k}$. 
        \\Here $2\leq m <t \leq k-1$. 
        \\For the left side, $i_t=3t+3$, $i_{t-1}=3t-2$, when $2\leq j \leq m-1$, $I_j=\{0,1,2\}$(we don't consider this situation if $m=2$); $I_m=\{0,2\}$; when $m+1\leq j \leq t-1$, $I_j=\{0,2\}$ or $\{0,1,2\}$; if $t\leq j \leq k-1$, $I_j$ can be any subset of $\{0,1,2\}$. 
        \\For the right side, when $1\leq j \leq m-1$, $I_j=\{0,1,2\}$; when $m\leq j \leq t-2$, $I_j=\{0,1\}$ or $\{0,1,2\}$; $I_{t-1}=\{0\}$; when $t \leq j \leq k-1$, $I_j$ is the same as the left side.
        
    \end{itemize}
    After $\mathcal{M}_4$, the unmatched faces of $*_7(2)$ are :
    \begin{itemize}
        \item ${0\choose1}{0,2\choose 4}{I_2\choose 7}\cdots {I_{t-1}\choose 3t-2}{I_t\choose 3t+2}\cdots {I_{k-1}\choose 3k-1}$, \\i.e. $3\leq t \leq k-1$, $i_t=3t+2$, $i_{t-1}=3t-2$;\\ when $2 \leq j \leq t-1$, $I_j=\{0,2\}$ or $\{0,1,2\}$;\\ there exists $m$, such that $t\leq m \leq k-1$, $I_m=\{0,2\}$;\\when $t\leq j \leq k-1$, $I_j$ can be any subset of $\{0,1,2\}$ that satisfies the condition of $*_7(2)$.
        \item ${0\choose1}{0,2\choose 4}{I_2\choose 7}\cdots {I_{t-1}\choose 3t-2}{I_t\choose 3t+2}\cdots {I_{m-1}\choose 3m-1}{I_m \choose 3m+3}\cdots {I_{k-1}\choose 3k}$,
        \\i.e. $3\leq t<m \leq k-1$, $i_t=3t+2$, $i_{t-1}=3t-2$, $i_{m-1}=3m-1$, $i_m=3m+3$;
        when $2\leq j \leq t-1$, $I_j=\{0,2\}$ or $\{0,1,2\}$;
        \\there exists $l$, such that $t\leq l \leq m-1$, $I_l=\{0,2\}$;
        \\when $t\leq j \leq k-1$, $I_j$ can be any subset of $\{0,1,2\}$ that satisfies the condition of $*_7(2)$.
    \end{itemize}
    The unmatched faces with a beginning of ${0\choose 1}{0,1\choose 6}{I_2\choose 9}$, denoted as $*_{6,9}(9)$, actually are:
    \begin{itemize}
        \item ${0\choose 1}{0,1\choose 6}{I_2\choose 9}\cdots {I_{k-1}\choose 3k}$,
        \\i.e. when $1\leq j \leq k-2$, $I_j=\{0,1\}$ or $\{0,1,2\}$; $I_{k-1}=\{0,1\}$ or $\{0,1,2\}$ or $\{0,2\}$.
        \item ${0\choose 1}{0,1\choose 6}{I_2\choose 9}\cdots {0,2\choose 3t+3}\cdots {I_{k-1}\choose 3k}$,
        \\i.e. $2\leq t \leq k-2$;
        \\when $2\leq j \leq t-1$, $I_j=\{0,1,2\}$ or $\{0,1\}$; 
        \\when $t+1\leq j \leq k-1$, $I_j=\{0,1,2\}$ or $\{0,2\}$.
    \end{itemize}
    The unmatched faces with a beginning of ${0\choose 1}{0,1,2\choose 6}{I_2\choose 9}$, denoted as $*_{6,9}(10)$, actually are:
    \begin{itemize}
        \item ${0\choose 1}{0,1,2\choose 6}{I_2\choose 9}\cdots {I_{k-1}\choose 3k}$,
        \\i.e. when $1\leq j \leq k-2$, $I_j=\{0,1\}$ or $\{0,1,2\}$; $I_{k-1}=\{0,1\}$ or $\{0,1,2\}$ or $\{0,2\}$.
        \item ${0\choose 1}{0,1,2\choose 6}{I_2\choose 9}\cdots {0,2\choose 3t+3}\cdots {I_{k-1}\choose 3k}$,
        \\i.e. $2\leq t \leq k-2$;
        \\when $2\leq j \leq t-1$, $I_j=\{0,1,2\}$ or $\{0,1\}$; 
        \\when $t+1\leq j \leq k-1$, $I_j=\{0,1,2\}$ or $\{0,2\}$.
    \end{itemize}
    The unmatched faces of $*_8(2)$ are:
    \begin{itemize}
        \item ${0\choose 1}{0,2\choose 4}{I_2\choose 8}\cdots {0,2\choose 3m+2}\cdots {I_{k-1}\choose 3k-1}$,
        \\i.e. $2\leq m \leq k-1$;
        \\when $2\leq j \leq m-1$, $I_j=\{0,1\}$ or $\{0,1,2\}$(we don't consider this situation if m=2);
        \\when $m+1\leq j \leq k-1$, $I_j=\{0,1,2\}$ or $\{0,2\}$(we don't consider this situation $m=k-1$).
        \item ${0\choose 1}{0,2\choose 4}{I_2\choose 8}\cdots{0,2\choose 3m+2}\cdots {I_{t-1}\choose 3t-1}{I_t\choose 3t+3}\cdots {I_{k-1}\choose 3k}$,
        \\i.e. when $2\leq j \leq m-1$, $I_j=\{0,1\}$ or $\{0,1,2\}$(we don't consider this situation if m=2);$I_m=\{0,2\}$;
        \\when $m+1\leq j \leq t-1$, $I_j=\{0,1,2\}$ or $\{0,2\}$(we don't consider this situation $m=t-1$);
        \\when $t\leq j \leq k-1$, $I_j$ can be any subset of $\{0,1,2\}$ that satisfies the condition of $*_8(2)$.
    \end{itemize}
    The unmatched faces with a beginning of ${0\choose 1}{0,2\choose 6}{I_2\choose 9}$, denoted as $*_{6,9}(11)$, actually are:
    \begin{itemize}
        \item ${0\choose 1}{0,2\choose 6}{I_2\choose 9}\cdots {I_{k-1}\choose3k}$, i.e. when $2\leq j\leq k-1$, $I_j=\{0,2\}$ or $\{0,1,2\}$.
    \end{itemize}
    The unmatched faces of $*_7(3)'$ are:
    \begin{itemize}
        \item ${0\choose 1}{0,1,2\choose 4}\cdots {0,2\choose3m+1}\cdots {I_{t-1}\choose 3t-2}{I_t\choose 3t+2}\cdots {I_{k-1}\choose 3k-1}$,
        \\i.e. $2\leq m <t \leq k-1$;
        \\when $2\leq j \leq m-1$, $I_j=\{0,1,2\}$(we don't consider this situation if $m=2$); $I_m=\{0,2\}$;
        \\when $m+1\leq j \leq t-1$, $I_j=\{0,2\}$ or $\{0,1,2\}$(we don't consider this situation if $m=t-1$);
        \\there exists $l$, such that $t\leq l \leq k-1$, $I_l=\{0,2\}$;
        \\when $t\leq j \leq k-1$, $I_j$ can be any subset of $\{0,1,2\}$ that satisfies the condition of $*_7(3)'$.
        \item ${0\choose 1}{0,1,2\choose 4}\cdots {I_{t-1}\choose 3t-2}{I_t\choose 3t+2}\cdots{I_{m-1}\choose3m-1}{I_m\choose 3m+3}\cdots  {I_{k-1}\choose 3k}$,
        \\i.e. $3\leq t <m\leq k-1$, $i_t=3t+2$, $i_{t-1}=3t-2$, $i_{m-1}=3m-1$, $i_m=3m+3$;
        \\there exists $n$, such that $2\leq n \leq t-1$, $I_n=\{0,2\}$;
        \\when $2\leq j \leq t-1$, $I_j=\{0,2\}$ or $\{0,1,2\}$;
        \\there exists $l$, such that $t\leq l \leq m-1$, $I_l=\{0,2\}$;
        \\when $t\leq j \leq k-1$, $I_j$ can be any subset of $\{0,1,2\}$ that satisfies the condition of $*_7(3)'$.
    \end{itemize}
    The unmatched faces of $*_8(8)$ and unmatched faces with a beginning of ${0\choose 1}{0,1,2\choose 5}{I_2\choose 9}$, together denoted as $*_5(12)$, actually are:
    \begin{itemize}
        \item ${0\choose 1}{0,1,2\choose 5}{I_2\choose 8}\cdots {I_{k-1}\choose 3k-1}$,
        \\i.e. when $2\leq j \leq k-1$, $I_j=\{0,1,2\}$ or $\{0,2\}$.
        \item ${0\choose 1}{0,1,2\choose 5}\cdots{0,1,2\choose 3t-1}{I_t\choose 3t+3}\cdots{0,2\choose 3m+3}\cdots {I_{k-1}\choose 3k}$,
        \\i.e. $2\leq t\leq m \leq k-1$;
        \\when $1\leq j \leq t-1$, $I_j=\{0,1,2\}$;
        \\when $t\leq j \leq m-1$, $I_j=\{0,1\}$ or $\{0,1,2\}$; $I_m=\{0,2\}$(we don't consider this situation if $m=t$);
        \\when $m+1\leq j \leq k-1$, $I_j=\{0,2\}$ or $\{0,1,2\}$(we don't consider this situation if $m=k-1$).
        \item ${0\choose 1}{0,1,2\choose 5}\cdots{0,1,2\choose 3t-1}{I_t\choose 3t+3}\cdots {I_{k-1}\choose 3k}$,
        \\i.e. $2\leq t \leq k-1$;
        \\when $1\leq j \leq t-1$, $I_j=\{0,1,2\}$;
        \\when $t\leq j \leq k-1$, $I_j=\{0,1\}$ or $\{0,1,2\}$.
        \item ${0\choose 1}{0,1,2\choose 5}\cdots{I_{m-1}\choose 3m-1}{0,2\choose 3m+2}\cdots{I_{t-1}\choose 3t-1}{I_t\choose 3t+3}\cdots {I_{k-1}\choose 3k}$,
        \\i.e. $2\leq m<t \leq k-1$;
        \\when $1\leq j \leq m-1$, $I_j=\{0,1,2\}$ or $\{0,1\}$, $I_m=\{0,2\}$;
        \\when $m+1\leq j \leq t-1$, $I_j=\{0,2\}$ or $\{0,1,2\}$(we don't consider this situation if $m=t-1$);
        \\when $t\leq j \leq k-1$, $I_j$ can be any subset of $\{0,1,2\}$ that satisfies the condition of $*_8(8)$.
    \end{itemize}
    For $\mathcal{M}_5$, we have the following matchings:
    \begin{itemize}
        \item ${0\choose 1}{0,1,2\choose 4}{I_2\choose 7} \cdots \leftrightarrow {0\choose 1}{0,2\choose 4}{I_2\choose 7}\cdots$, i.e. all remaining faces of $*_7(3)'$ are matched with some faces of $*_7(2)$.  
        \item ${0 \choose 1}{0,1,2 \choose 5}{I_2 \choose 9}\cdots \leftrightarrow {0 \choose 1}{0,1\choose 6}{I_2\choose 9}\cdots$, i.e. some faces of $*_5(12)$ are matched with all remaining faces of $*_{6,9}(9)$.
        \item ${0\choose 1}{0,1,2\choose 5}{0,1,2 \choose 8}\cdots {0,1,2 \choose 3k-1}\leftrightarrow {0 \choose 1}{0,1,2\choose 6}{0,1,2 \choose 9}\cdots {0,1,2\choose 3k-3}{0,1 \choose 3k}$.
        \item ${0 \choose 1 }{0,1,2 \choose 5}\cdots {0,1,2 \choose 3t-1}{I_t \choose 3t+3}\cdots {0,2 \choose 3m+3}\cdots {I_{k-1}\choose 3k}\\\leftrightarrow {0 \choose 1}{0,1,2\choose 6}\cdots {0,1\choose 3t}{I_t\choose 3t+3}\cdots {0,3 \choose 3m+3}\cdots {I_{k-1}\choose 3k}$, i.e. some faces of $*_5(12)$ are matched with some faces of $*_{6,9}(10)$.
        \\Here $3\leq t \leq m\leq k-1$. \\For the left side, $i_{t-1}=3t-1$, $i_t=3t+3$, when $1\leq j \leq t-1$, $I_j=\{0,1,2\}$; when $t\leq j \leq m-1 $, $I_j=\{0,1\}$ or $\{0,1,2\}$(we don't consider this situation if $m=t+1$); when $m+1 \leq j \leq k-1$, $I_j=\{0,2\}$ or $\{0,1,2\}$(we don't consider this situation if $m=k-1$). 
        \\For the right side, when $1\leq j \leq t-2$, $I_j=\{0,1,2\}$; $I_{t-1}=\{0,1\}$; when $t\leq j \leq m-1 $, $I_j=\{0,1\}$ or $\{0,1,2\}$(we don't consider this situation if $m=t+1$); when $m+1 \leq j \leq k-1$, $I_j=\{0,2\}$ or $\{0,1,2\}$(we don't consider this situation if $m=k-1$).
        \item ${0 \choose 1}{0,1,2 \choose 5}\cdots {0,1,2 \choose 3t-1}{I_t \choose 3t+3}\cdots {I_{k-1}\choose 3k}\leftrightarrow {0\choose 1}{0,1,2\choose 6}\cdots {0,1\choose 3t}{I_t\choose 3t+3}\cdots {I_{k-1}\choose 3k}$, some faces of $*_5(12)$ are matched with some faces of $*_{6,9}(10)$.
        \\Here $3\leq t\leq k-1 $. 
        \\For the left side, $i_{t-1}=3t-1$, $i_t=3t+3$, when $1\leq j \leq t-1$, $I_j=\{0,1,2\}$; when $t\leq j \leq k-1$, $I_j=\{0,1,2\}$ or $\{0,1\}$. 
        \\For the right side, when $1\leq j \leq t-2$, $I_j=\{0,1,2\}$; $I_t=\{0,1\}$; when $t\leq j \leq k-1$, $I_j=\{0,1,2\}$ or $\{0,1\}$.
    \end{itemize}
    After $\mathcal{M}_5$, the unmatched faces of $*_7(2)$ are:
    \begin{itemize}
        \item ${0\choose1}{0,2\choose 4}{0,1,2\choose 7}\cdots {0,1,2\choose 3t-2}{I_t\choose 3t+2}\cdots {I_{k-1}\choose 3k-1}$, \\i.e. $3\leq t \leq k-1$, $i_t=3t+2$, $i_{t-1}=3t-2$;
        \\ when $2 \leq j \leq t-1$, $I_j=\{0,1,2\}$;
        \\ there exists $m$, such that $t\leq m \leq k-1$, $I_m=\{0,2\}$;
        \\when $t\leq j \leq k-1$, $I_j$ can be any subset of $\{0,1,2\}$ that satisfies the condition of $*_7(2)$.
        \item ${0\choose1}{0,2\choose 4}{0,1,2\choose 7}\cdots {0,1,2\choose 3t-2}{I_t\choose 3t+2}\cdots {I_{m-1}\choose 3m-1}{I_m \choose 3m+3}\cdots {I_{k-1}\choose 3k}$,
        \\i.e. $3\leq t<m \leq k-1$, $i_t=3t+2$, $i_{t-1}=3t-2$, $i_{m-1}=3m-1$, $i_m=3m+3$;
        when $2\leq j \leq t-1$, $I_j=\{0,1,2\}$;
        \\there exists $l$, such that $t\leq l \leq m-1$, $I_l=\{0,2\}$;
        \\when $t\leq j \leq k-1$, $I_j$ can be any subset of $\{0,1,2\}$ that satisfies the condition of $*_7(2)$.
    \end{itemize}
    The unmatched faces of $*_5(12)$ are:
    \begin{itemize}
        \item ${0\choose 1}{0,1,2\choose 5}\cdots{I_{m-1}\choose 3m-1}{0,2\choose 3m+2}\cdots{I_{t-1}\choose 3t-1}{I_t\choose 3t+3}\cdots {I_{k-1}\choose 3k}$,
        \\i.e. $2\leq m<t \leq k-1$;
        \\when $1\leq j \leq m-1$, $I_j=\{0,1,2\}$ or $\{0,1\}$, $I_m=\{0,2\}$;
        \\when $m+1\leq j \leq t-1$, $I_j=\{0,2\}$ or $\{0,1,2\}$(we don't consider this situation if $m=t-1$);
        \\when $t\leq j \leq k-1$, $I_j$ can be any subset of $\{0,1,2\}$ that satisfies the condition of $*_8(8)$.
        \item ${0\choose 1}{0,1,2\choose 5}{I_2\choose 8}\cdots{0,2\choose 3m+2}\cdots {I_{k-1}\choose 3k-1}$,
        \\i.e. $2\leq m \leq k-1$;
        \\when $2\leq j \leq m-1$, $I_j=\{0,1,2\}$; $I_m=\{0,2\}$(we don't consider this situation if $m=2$);
        \\when $m+1\leq j \leq k-1$, $I_j=\{0,1,2\}$ or $\{0,2\}$(we don't consider this situation if $m=k-1$).
    \end{itemize}
    The unmatched faces of $*_{6,9}(10)$ are:
    \begin{itemize}
        \item ${0\choose 1}{0,1,2\choose 6}{I_2\choose 9}\cdots {I_{k-1}\choose3k}$, i.e. when $2\leq j\leq k-1$, $I_j=\{0,2\}$ or $\{0,1,2\}$.
    \end{itemize}
    For $\mathcal{M}_6$, we have the following matchings:
    \begin{itemize}
        \item ${0\choose 1}{0,1,2\choose 5}\cdots \leftrightarrow {0\choose 1}{0,2\choose 5}\cdots$, i.e. all remaining faces of $*_5(12)$ are matched with some faces of $*_8(7)$.
    \end{itemize}
    Afeter $\mathcal{M}_6$, the unmatched faces of $*_8(7)$ are :
    \begin{itemize}
        \item ${0 \choose 1}{0,2\choose 5}{0,1,2\choose 8}\cdots{0,1,2\choose 3k-1}$.
        \item ${0\choose 1}{0,2\choose 5}\cdots {0,1,2\choose 3t-1}{I_{t-1}\choose 3t+3}\cdots{0,2\choose 3m+3}\cdots {I_{k-1}\choose 3k}$,
        \\i.e. $3\leq t\leq m \leq k-1$, $i_t=3t+3$, $i_{t-1}=3t-1$;
        \\when $2\leq j \leq t-1$, $I_j=\{0,1,2\}$;
        \\when $t\leq j \leq m-1$, $I_j=\{0,1\}$ or $\{0,1,2\}$(we don't consider this situation if $m=t$);$I_m=\{0,2\}$
        \\when $m+1\leq j \leq k-1$, $I_{j}=\{0,2\}$ or $\{0,1,2\}$(we don't consider this situation if $m=k-1$).
        \item ${0\choose 1}{0,2\choose 5}\cdots {0,1,2\choose 3t-1}{I_t \choose 3t+3}\cdots {I_{k-1}\choose 3k}$.
        \\i.e. when $1\leq j \leq t-1$, $I_j=\{0,1,2\}$;
        \\when $t\leq j \leq k-1$, $I_j=\{0,1,2\}$ or $\{0,1\}$
    \end{itemize}
    For $\mathcal{M}_7$, we have the following matchings:
    \begin{itemize}
        \item ${0 \choose 1}{0,1,2 \choose 6}{I_2 \choose 9}\cdots{I_{k-1} \choose 3k}\leftrightarrow {0 \choose 1}{0,2 \choose 6}{I_2 \choose 9} \cdots {I_{k-1} \choose 3k}$, i.e. all remaining faces of $*_{6,9}(10)$ are matched with all remaining faces of $*_{6,9}(11)$.
        \item ${0 \choose 1}{0,2 \choose 4}{0,1,2 \choose 7}\cdots{0,1,2 \choose 3t-2}{I_t \choose 3t+2}\cdots{I_{k-1} \choose 3k-1}\leftrightarrow{0 \choose 1}{0,2 \choose 4}{I_2 \choose 8}\cdots{0,1 \choose 3t-1}{I_t \choose 3t+2}\cdots {I_{k-1} \choose 3k-1}$, i.e. some faces of $*_7(2)$ are matched with some faces of $*_8(2)$. 
        \\Here $3\leq t \leq k-1$. 
        \\For the left side, $i_{t-1}=3t-2$, $i_t=3t+2$, when $2\leq j \leq t-1$, $I_j=\{0,1,2\}$. There exists $t\leq l \leq k-1$, such that $I_l=\{0,2\}$. 
        \\For the right side, when $2\leq j \leq t-2$, $I_j=\{0,1,2\}$(we don't consider this situation if $t=3$); $I_{t-1}=\{0,1\}$; when $t\leq j \leq k-1$, $I_j$ is the same as the left side.
        \item ${0 \choose 1}{0,2 \choose 4}{0,1,2 \choose 7}\cdots{0,1,2 \choose 3t-2}{I_t \choose 3t+2}\cdots{I_{m-1} \choose 3m-1}{I_m \choose 3m+3}\cdots{I_{k-1} \choose 3k}\\\leftrightarrow{0 \choose 1}{0,2 \choose 4}{0,1,2 \choose 8} \cdots {0,1,2\choose 3t-4}{0,1 \choose 3t-1}{I_t \choose 3t+2} \cdots {I_{m-1} \choose 3m-1}{I_m \choose 3m+3}\cdots{I_{k-1} \choose 3k}$, i.e. all remaining faces of $*_7(2)$ are matched with some faces of $*_8(2)$.
        \\Here $3\leq t <m \leq k-1$. 
        \\For the left side, $i_t=3t+2$, $i_{t-1}=3t-2$, $i_{m-1}=3m-1$, $i_m=3m+3$, there exists $ t \leq l \leq m-1$, such that $I_l=\{0,2\}$; when $m \leq j \leq k-1$, $I_j \neq \{0\}$. 
        \\For the right side, when $2\leq j \leq t-2$, $I_j=\{0,1,2\}$(we don't consider this situation if $t=3$); $I_{t-1}=\{0,1\}$; when $t\leq j \leq k-1$, $I_j$ is the same as the left side.
    \end{itemize}
    After $\mathcal{M}_7$, the unmatched faces of $*_8(2)$ are:
    \begin{itemize}
        \item ${0\choose 1}{0,2\choose 4}{I_2\choose 8}\cdots {0,2\choose 3m+2}\cdots {I_{k-1}\choose 3k-1}$,
        \\i.e. $2\leq m \leq k-1$;
        \\when $2\leq j \leq m-1$, $I_j=\{0,1,2\}$(we don't consider this situation if m=2);
        \\when $m+1\leq j \leq k-1$, $I_j=\{0,1,2\}$ or $\{0,2\}$(we don't consider this situation $m=k-1$).
        \item ${0\choose 1}{0,2\choose 4}{I_2\choose 8}\cdots{0,2\choose 3m+2}\cdots {I_{t-1}\choose 3t-1}{I_t\choose 3t+3}\cdots {I_{k-1}\choose 3k}$,
        \\i.e. when $2\leq j \leq m-1$, $I_j=\{0,1,2\}$(we don't consider this situation if m=2);$I_m=\{0,2\}$;
        \\when $m+1\leq j \leq t-1$, $I_j=\{0,1,2\}$ or $\{0,2\}$(we don't consider this situation $m=t-1$);
        \\when $t\leq j \leq k-1$, $I_j$ can be any subset of $\{0,1,2\}$ that satisfies the condition of $*_8(2)$.
    \end{itemize}
    For $\mathcal{M}_8$, we have the following matchings:
    \begin{itemize}
        \item ${0 \choose 1}{0,2 \choose 5}{0,1,2 \choose 8} \cdots {0,1,2 \choose 3k-1}
        \leftrightarrow {0 \choose 1}{0,2 \choose 5}{0,1,2 \choose 9}\cdots {0,1 \choose 3k}$.
        \item ${0 \choose 1}{0,2 \choose 5}{0,1,2\choose 8}\cdots{0,1,2 \choose 3t-1}{I_t\choose 3t+3}\cdots {0,2\choose 3m+3}\cdots {I_{k-1}\choose 3k}\\\leftrightarrow {0\choose 1}{0,2 \choose 5}{I_2\choose 9}\cdots {0,1\choose 3t}{I_t\choose 3t+3}\cdots {0,2\choose 3m+3}\cdots {I_{k-1}\choose 3k}$, i.e. some faces of $*_8(7)$ are matched with some faces of $*_9(7)$.
        \\Here $3\leq t\leq m \leq k-1$. 
        \\For the left side, when $2\leq j \leq t-1$, $I_j=\{0,1,2\}$; when $t\leq j \leq m-1$, $I_j=\{0,1\}$ or $\{0,1,2\}$(we don't consider this situation if $m=t+1$); $I_m=\{0,2\}$; when $m+1\leq j \leq k-1$, $I_j=\{0,2\}$ or $\{0,1,2\}$(we don't consider this situation if $m=k-1$). 
        \\For the right side, when $2\leq j \leq t-2$, $I_j=\{0,1,2\}$(we don't consider this situation if $t=3$); $I_{t-1}=\{0,1\}$; when $t\leq j \leq k-1$, $I_j$ is the same as the left side.
        \item ${0 \choose 1}{0,2\choose 5}{0,1,2\choose 8}\cdots {0,1,2\choose 3t-1}{I_t \choose 3t+3}\cdots {I_{k-1}\choose 3k}\leftrightarrow {0 \choose 1}{0,2 \choose 5}{I_2\choose 9}\cdots {0,1\choose 3t}{I_t\choose 3t+3}\cdots {I_{k-1}\choose 3k}$, i.e. all remaining faces of $*_8(7)$ are matched with some faces of $*_9(7)$. 
        \\Here $3\leq t \leq k-1$. 
        \\For the left side, when $2\leq j \leq t-1$, $I_j=\{0,1,2\}$; when $t\leq j \leq k-1$, $I_j=\{0,1,2\}$ or $\{0,1\}$.
        \\For the right side, when $2\leq j \leq t-2$, $I_j=\{0,1,2\}$(we don't consider this situation if $t=3$); $I_{t-1}=\{0,1\}$; when $t\leq j \leq k-1$, $I_j$ is the same as the left side.
    \end{itemize}
    After $\mathcal{M}_8$, the unmatched faces of $*_9(7)$ are:
    \begin{itemize}
        \item ${0\choose 1}{0,2 \choose 5}{I_2 \choose 9}\cdots{I_{k-1}\choose 3k}$,
        \\i.e. when $2\leq j \leq k-1$, $I_j=\{0,1,2\}$ or $\{0,2\}$.
    \end{itemize}
    For $\mathcal{M}_9$, we have the following matchings:
    \begin{itemize}
        \item ${0\choose 1}{0,2\choose 4}{0,1,2 \choose 8}\cdots \leftrightarrow {0\choose 1}{0,2 \choose 4}{0,2 \choose 8}\cdots $, i.e. some faces of $*_8(2)$ are matched with some faces of $*_8(2)$. 
    \end{itemize}
    After $\mathcal{M}_{9}$, the unmatched faces of $*_8(2)$ are:
    \begin{itemize}
        \item ${0\choose 1}{0,2\choose4}{0,2\choose 8}{0,1,2\choose 11}\cdots {0,1,2\choose 3k-1}$.
        \item ${0\choose 1}{0,2 \choose 4}{0,2\choose 8}{I_2 \choose 12}\cdots {I_{k-1}\choose 3k}$,
        \\i.e. when $3\leq j \leq k-1$, $I_j=\{0,1,2\}$ or $\{0,1\}$.
        \item ${0\choose 1}{0,2\choose 4}{0,2\choose 8}{0,1,2\choose 11}\cdots {0,1,2\choose 3t-1}{I_t\choose 3t+3}\cdots {I_{k-1}\choose 3k}$,
        \\i.e. $4\leq t \leq k-1$, $i_t=3t+3$, $i_{t-1}=3t-1$;
        \\when $3\leq j \leq t-1$, $I_j=\{0,1,2\}$;
        \\when $t\leq j \leq k-1$, $I_j=\{0,1\}$ or $\{0,1,2\}$.
    \end{itemize}
    For $\mathcal{M}_{10}$, we have the following matchings:
    \begin{itemize}
        \item ${0\choose 1}{0,2 \choose 5}{0,1,2\choose 9}\cdots {I_{k-1}\choose 3k}\leftrightarrow {0\choose 1}{0,2 \choose 5}{0,2\choose 9}\cdots {I_{k-1}\choose 3k}$, i.e. some faces of $*_9(7)$ are matched with all remaining faces of $*_9(7)$.
    \end{itemize}
    For $\mathcal{M}_{11}$, we have the following matchings:
    \begin{itemize}
        \item ${0 \choose 1}{0,2\choose 4}{0,2 \choose 8}{0,1,2 \choose 11}\cdots {0,1,2 \choose 3k-1}\leftrightarrow {0 \choose 1}{0,2 \choose 4}{0,2\choose 8}{0,1,2\choose 12}\cdots {0,1 \choose 3k}$.
        \item ${0 \choose 1}{0,2\choose 4}{0,2 \choose 8}{0,1,2 \choose 11}\cdots {0,1,2\choose 3t-1}{I_t\choose 3t+3}\cdots{I_{k-1} \choose 3k}\leftrightarrow {0 \choose 1}{0,2 \choose 4}{0,2\choose 8}{0,1,2\choose 12}\cdots {0,1\choose 3t}{I_t\choose 3t+3}\cdots {I_{k-1}\choose 3k}$. 
        \\Here $4 \leq t \leq k-1$. 
        \\For the left side, $i_t=3t+3$, $i_{t-1}=3t-1$, when $3\leq j \leq t-1$, $I_j=\{0,1,2\}$; when $t \leq j \leq k-1$, $I_j=\{0,1\}$ or $\{0,1,2\}$. 
        \\For the right side, when $3\leq j \leq t-2$, $I_j=\{0,1,2\}$(we don't consider this situation if $t=4$); $I_{t-1}=\{0,1\}$; when $t \leq j \leq k-1$, $I_j$ is the same as the left side.
    \end{itemize}
    And the only unmatched face is $[n]\setminus {0\choose 1}{0,2\choose 4}{0,2\choose 8}{0,1,2 \choose 12}\cdots {0,1,2 \choose 3k}=\{2,3,5,7,9,11\}$. By discrete Morse theory, we have $\Delta_{k}^t(W_{3k+2})\simeq \mathbb{S}^{5}$.
\end{proof}

\section{Conclusions and Further Directions}\label{sec5:conclu}
In \cite{Bayer_2024_02} and \cite{Bayer_2024}, the authors  determined the homotopy type of the complex $\Delta_k^t(C_n)$ and $\Delta_k(C_n)$ for all $k$. They also proved that the complex $\Delta_2^t(W_n)$ ($n\ge7$) is homotopy equivalent to a single sphere of dimension $n-4$. Later in \cite{3-cut_complexes}, the authors proved that the complex $\Delta_3(W_n)$ of the squared cycle graphs is shellable for all
$n \ge 9$, by constructing a shelling order. In this paper, we proved that the complex $\Delta_3^t(W_n)$ ($n\ge 9)$ is homotopy equivalent to a single sphere of dimension $n-6$. Moreover, we construct a new approach to describe the faces of $\Delta_k^t(W_n)$, and solve the condition when $n=3k+1$ and $n=3k+2$. We believe that this method can be generalized
to prove the homotopy type of $\Delta_k^t(W_{3k+i})$ for all $k\ge 3$ and $1\le i\le k-1$.

Furthermore, by using SageMath, we calculated the homology groups of $\Delta_k^t(C_n^p)$, namely $k$-total cut complexes of powers of cycles (with coefficient $\mathbb{Z}$), and conjectured general cases for total cut complexes of powers of cycles.  We have data for some small values of $k$, $p$ and $n$, which is illustrated in Tables \ref{tab:total-2-cut}-\ref{tab:total-4-cut}. Missing entries in these tables indicate that the corresponding complex $\Delta_k^t(C_n^p)$ is void. An entry of the form $i : \mathbb{Z}^\beta$ denotes that the $i$-th homology group is $\mathbb{Z}^\beta$.
Through these tables, we propose the fowlling conjectures.
\begin{table}[ht]
    \tiny
    \centering
    \begin{tabular}{|c|c|c|c|c|c|c|c|c|c|c|c|}
     \hline
    \diagbox[linewidth=0.2pt, width=.6cm, height=.6cm]{$p$}{$n$}  &8 &9 &10 &11 &12 &13 &14 &15 &16 \\
    \hline
      3 &$2: \mathbb{Z}^{}$ &$4: \mathbb{Z}^{2}$& $6: \mathbb{Z}^{}$& $7: \mathbb{Z}^{}$& $8: \mathbb{Z}^{}$& $9: \mathbb{Z}^{}$ & $10: \mathbb{Z}^{}$ & $11: \mathbb{Z}^{}$ & $12: \mathbb{Z}^{}$\\
    \hline
      4 & && $3: \mathbb{Z}^{}$& $5: \mathbb{Z}^{}$& $7: \mathbb{Z}^{3}$& $9: \mathbb{Z}^{}$ & $10: \mathbb{Z}^{}$ & $11: \mathbb{Z}^{}$ & $12: \mathbb{Z}^{}$\\
    \hline
      5 & && & & $4: \mathbb{Z}^{}$& $7: \mathbb{Z}^{}$ & $8: \mathbb{Z}^{}$ & $10: \mathbb{Z}^{4}$ & $12: \mathbb{Z}^{}$\\
    \hline
      6  & && & &  & & $5: \mathbb{Z}^{}$ & $8: \mathbb{Z}^{2}$ & $10: \mathbb{Z}^{}$\\
    \hline
    \end{tabular}
    \caption{ Homology  of total $2$-cut complexes $\Delta_{2}^t(C_n^p)$}
    \label{tab:total-2-cut}
\end{table}
\begin{conj}\label{conj:total-2-cut}
    For $p\ge3$ and $n\ge 3p+1$, the complex $\Delta_2^t(C_n^p)$ is homotopy equivalent to $\mathbb{S}^{n-4}$.
\end{conj}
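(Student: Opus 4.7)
The plan is to apply discrete Morse theory directly to $\Delta_2^t(C_n^p)$, following the framework of Theorems \ref{zjq-zwh-k=3} and \ref{zjq-n=3k+1}. A subset $\sigma\subseteq[n]$ is a face if and only if $T:=[n]\setminus\sigma$ is not contained in any cyclic arc $[a,a+p]$ of length $p+1$, equivalently $T$ contains two vertices at cyclic distance greater than $p$. The facets are $[n]\setminus\{i,j\}$ for $2$-independent pairs $\{i,j\}$, so the complex is pure of dimension $n-3$, and the goal is to produce an acyclic matching whose single critical cell has dimension $n-4$; Kozlov's theorem then yields $\Delta_2^t(C_n^p)\simeq\mathbb{S}^{n-4}$.

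First we apply the element matching $\mathcal{M}_1$. A face is unmatched precisely when $1\in T$, $T$ contains some vertex $v\in\{p+2,\ldots,n-p\}$ (which is what forces a $2$-independent pair through $1$), and $T\setminus\{1\}\subseteq[a,a+p]$ for some $a\in\{2,\ldots,n-p\}$. Hence each unmatched complement $T$ is indexed by a single cyclic window $[a,a+p]$ together with a subset of that window meeting $\{p+2,\ldots,n-p\}$; this description is markedly simpler than the one in Section \ref{mainsec2}, because for $k=2$ the non-$1$ part of $T$ lies in a single arc rather than splitting into $k-1$ pieces. We then apply $\mathcal{M}_2,\mathcal{M}_3,\ldots,\mathcal{M}_{n-1}$ in succession. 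Mirroring the schemes of Theorems \ref{zjq-n=3k+1} and \ref{n=3k+2}, at each step most surviving configurations cancel in pairs, with the residual ones propagating to the next matching under a shrinking admissible window. After all matchings the unique critical cell should correspond to an extremal complement $T$ consisting of three vertices pairwise at cyclic distance exactly $p+1$ (for instance $T=\{1,p+2,2p+3\}$, which is admissible since $n\geq 3p+1$), so $|\sigma|=n-3$ and $\dim\sigma=n-4$. Acyclicity of the composite matching is automatic from Theorem \ref{element-acyclic1}, and Kozlov's theorem then delivers the conclusion.

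The main obstacle is the bookkeeping in the iterated matchings: verifying that $\mathcal{M}_i$ always produces an eligible partner when needed, and that no extra critical cells slip through the cracks. An attractive alternative is to invoke Alexander duality. The dual complex of $\Delta_2^t(C_n^p)$ is the clique complex $\Cl(C_n^p)$, whose facets are precisely the cyclic arcs $T_a=\{a,a+1,\ldots,a+p\}$ (for $n\geq 3p+1$ these are the maximal cliques of $C_n^p$, since extending any such arc would introduce a pair at cyclic distance $p+1$). Establishing $\Cl(C_n^p)\simeq\mathbb{S}^1$, for example by a cyclic sequence of elementary collapses peeling the $p-1$ interior codimension-one faces of each $T_a$ down toward the spine $\{a,a+1\}$, combined with the combinatorial Alexander duality isomorphism $\tilde H_i(\Delta_2^t(C_n^p))\cong\tilde H^{n-i-3}(\Cl(C_n^p))$ and a simple-connectivity check on $\Delta_2^t(C_n^p)$ would also yield the conjecture.
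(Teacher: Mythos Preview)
This statement is Conjecture~\ref{conj:total-2-cut} in the paper: it is offered as a conjecture supported by the SageMath data in Table~\ref{tab:total-2-cut}, and the paper gives no proof. There is therefore nothing in the paper to compare your argument against.

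Your proposal is also not yet a proof. The Morse-theoretic route correctly characterizes the faces surviving $\mathcal{M}_1$ (your description of the complements $T$ is accurate, including the restriction $a\in\{2,\ldots,n-p\}$, since the presence of some $v\in\{p+2,\ldots,n-p\}$ in $T\setminus\{1\}$ forces the containing arc not to wrap through $1$). But the assertion that the subsequent element matchings leave exactly one critical cell is never verified; you name a plausible survivor $T=\{1,p+2,2p+3\}$ and then concede that the bookkeeping is the main obstacle. In this style of argument that bookkeeping \emph{is} the proof, so as written this part is only an outline.

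Your Alexander duality route, by contrast, is nearly complete and is the one worth pursuing. The identification of the Alexander dual as $\Cl(C_n^p)$ is correct: $\sigma$ is a non-face of $\Delta_2^t(C_n^p)$ exactly when $[n]\setminus\sigma$ is a clique of $C_n^p$. Two ingredients remain.
\begin{enumerate}
\item The homotopy equivalence $\Cl(C_n^p)\simeq\mathbb{S}^1$ for $n\geq 3p+1$. This is essentially known: $\Cl(C_n^p)$ is the Vietoris--Rips complex of the $n$-cycle at scale $p$, and results of Adamaszek on clique complexes of cycle powers give $\Cl(C_n^p)\simeq\mathbb{S}^1$ precisely in the range $n>3p$. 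If you prefer a self-contained argument, your proposed collapse of each $p$-simplex onto its spine can be made to work, but the cyclic coherence requires care, and the bound $n\geq 3p+1$ is genuinely needed.
\item Simple connectivity of $\Delta_2^t(C_n^p)$. This is immediate: any subset of $[n]$ of size at most $n-p-2$ is automatically a face, since its complement has at least $p+2$ vertices and hence cannot lie in an arc of $p+1$ consecutive vertices. Thus the $(n-p-3)$-skeleton of $\Delta_2^t(C_n^p)$ coincides with that of the full simplex, and for $p\geq3$, $n\geq 3p+1$ this is well above dimension~$2$.
\end{enumerate}
With these in hand, combinatorial Alexander duality gives $\tilde H_i(\Delta_2^t(C_n^p))\cong\mathbb{Z}$ for $i=n-4$ and $0$ otherwise, and Hurewicz plus Whitehead upgrades this to $\Delta_2^t(C_n^p)\simeq\mathbb{S}^{n-4}$. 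So your second approach would settle the conjecture outright once item~(1) is cited or established.
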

Notice in \cite{3-cut_complexes}, the authors also conjectured the homology groups of $2$-cut complexes $\Delta_2(C_n^p)$ and surprisingly these two hypotheses are consistent. This is due to the definition of these two complexes is consistent when $k=2$.

For general integers $n$, $k$ and $p$, we are more concerned about the performance of $\Delta_k^t(C_n^p)$ when $n$ is large enough, as the following tables show.
\begin{table}[ht]
    \centering\tiny
    \begin{tabular}{|c|c|c|c|c|c|}
    \hline
    \diagbox[linewidth=0.3pt, width=.85cm, height=.9cm]{$n$}{$p$} &2 &3 &4 &5 & 6 \\
    \hline
        11 &$5:\mathbb{Z}^{}$&&&&\\
    \hline
        16  &$10:\mathbb{Z}^{}$&$10:\mathbb{Z}^{}$&&&\\
    \hline
        21  &$15:\mathbb{Z}^{}$&$15:\mathbb{Z}^{}$&$15:\mathbb{Z}^{}$&&\\
    \hline
        26 &$20:\mathbb{Z}^{}$&$20:\mathbb{Z}^{}$&$20:\mathbb{Z}^{}$&$20:\mathbb{Z}^{}$& \\
    \hline
        31  &$25:\mathbb{Z}^{}$ &$25:\mathbb{Z}^{}$ &$25:\mathbb{Z}^{}$ &$25:\mathbb{Z}^{}$ &$25:\mathbb{Z}^{}$\\
    \hline
       32  &$26:\mathbb{Z}^{}$ &$26:\mathbb{Z}^{}$ &$26:\mathbb{Z}^{}$ &$26:\mathbb{Z}^{}$ &$26:\mathbb{Z}^{}$\\
    \hline
        33
        &$27:\mathbb{Z}^{}$ &$27:\mathbb{Z}^{}$ &$27:\mathbb{Z}^{}$ &$27:\mathbb{Z}^{}$ &$27:\mathbb{Z}^{}$\\
    
    \hline
    \end{tabular}
    \caption{Homology of total $3$-cut complexes $\Delta_{3}^t(C_n^p)$} 
    \label{tab:total-3-cut}
\end{table}
\begin{table}[ht]
    \centering\tiny
    \begin{tabular}{|c|c|c|c|c|c|}
    \hline
    \diagbox[linewidth=0.3pt, width=.85cm, height=.9cm]{$n$}{$p$} &2 &3 &4 &5 & 6 \\
    \hline
        15 &$7:\mathbb{Z}^{}$&&&&\\
    \hline
        16  &$8:\mathbb{Z}^{}$&$2:\mathbb{Z}^{}$&&&\\
    \hline
        22  &$14:\mathbb{Z}^{}$&$14:\mathbb{Z}^{}$&&&\\
    \hline
        29 &$21:\mathbb{Z}^{}$&$21:\mathbb{Z}^{}$&$21:\mathbb{Z}^{}$&& \\
    \hline
        36  &$28:\mathbb{Z}^{}$ &$28:\mathbb{Z}^{}$ &$28:\mathbb{Z}^{}$ &$28:\mathbb{Z}^{}$ &\\
    \hline
       43  &$35:\mathbb{Z}^{}$ &$35:\mathbb{Z}^{}$ &$35:\mathbb{Z}^{}$ &$35:\mathbb{Z}^{}$ &$35:\mathbb{Z}^{}$\\
    \hline
        44
        &$36:\mathbb{Z}^{}$ &$36:\mathbb{Z}^{}$ &$36:\mathbb{Z}^{}$ &$36:\mathbb{Z}^{}$ &$36:\mathbb{Z}^{}$\\
    
    \hline
    \end{tabular}
    \caption{Homology of total $4$-cut complexes $\Delta_{4}^t(C_n^p)$} 
    \label{tab:total-4-cut}
\end{table}

Based on our calculation given in \Cref{tab:total-3-cut} and \Cref{tab:total-4-cut}, we make the following conjecture for $n\ge (2k-1)p+1$.
\begin{conj}\label{conj:all}
    For $p\ge 3$ and $n\ge (2k-1)p+1 $, the complex $\Delta_k^t(C_n^p)$ is homotopy equivalent to a single sphere of dimension $n-2k$.
\end{conj}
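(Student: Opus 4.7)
The plan is to extend the discrete Morse theory strategy developed in Sections~\ref{sec3:main} and~\ref{mainsec2} from the case $p = 2$ (where $W_n = C_n^2$) to the general $p$-th power $C_n^p$. First, I would adapt the block parametrization of Section~\ref{mainsec2}: after applying the element matching $\mathcal{M}_1$, every unmatched face $\sigma$ satisfies $1 \notin \sigma$ and $\sigma \cup \{1\} \notin \Delta_k^t(C_n^p)$, and one can write
\[
[n] \setminus \sigma = B_0 \sqcup B_1 \sqcup \cdots \sqcup B_{k-1},
\]
where each block $B_j$ is a subset of the arc $\{i_j, i_j + 1, \ldots, i_j + p\}$ containing its base point $i_j$, with $i_0 = 1$, and $\{i_0, i_1, \ldots, i_{k-1}\}$ forms a $k$-independent set in $C_n^p$ (i.e.\ $i_{j+1} - i_j \geq p+1$ and $n + 1 - i_{k-1} \geq p + 1$). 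Each $B_j$ is parametrized by a subset $I_j \subseteq \{0, 1, \ldots, p\}$ with $0 \in I_j$.

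Second, I would prove a generalization of Lemma~\ref{lem:3.1}: for $n \geq (2k-1)p+1$, any subset of $[n]$ of cardinality at most $n - (2k-2)p - 1$ is automatically a face of $\Delta_k^t(C_n^p)$. The proof would proceed by analyzing the arc decomposition of $[n]$ obtained after removing a set of vertices, and bounding the total independent number in the arcs from below using the elementary fact that an arc of length $\ell$ contains a $\lceil \ell / (p+1) \rceil$-independent set. This reduces the set of faces to be processed by the remaining matchings to those of relatively large co-cardinality.

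Third, I would apply a carefully ordered sequence of element matchings $\mathcal{M}_2, \mathcal{M}_3, \ldots, \mathcal{M}_m$ (for some $m$ on the order of $(2k-1)p$) that generalizes the matchings of Theorems~\ref{zjq-n=3k+1} and~\ref{n=3k+2}. The guiding principle, already visible in the $p=2$ proofs, is to pin down the shapes of the early blocks $B_0, B_1$ and then propagate matchings from left to right through the middle blocks, using the freedom within each $B_j$ to pair a face containing vertex $j$ with one not containing it. The goal is that all but one unmatched face survive, and the sole survivor is the canonical face
\[
\sigma^* \;=\; [n] \setminus \bigl\{\,1,\; n-1-(2k-3)p,\; n-1-(2k-4)p,\; \ldots,\; n-1-p,\; n-1\,\bigr\},
\]
which misses exactly $2k-1$ vertices and whose complement contains the $k$-independent set $\{1,\, n-1-(2k-3)p,\, n-1-(2k-5)p,\, \ldots,\, n-1-p\}$. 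Combining all the matchings via Theorem~\ref{element-acyclic1} and applying Forman's theorem would then give that $\Delta_k^t(C_n^p)$ is homotopy equivalent to a CW complex with one $0$-cell and one $(n-2k)$-cell, hence to $\mathbb{S}^{n-2k}$.

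The main obstacle is the combinatorial explosion in Step 3. In the $p=2$ setting each block has only four possible shapes, and yet the proof of Theorem~\ref{n=3k+2} required many pages of explicit matching rules. For general $p$, each block has up to $2^p$ possible shapes, so a na\"ive case-by-case generalization becomes intractable. A promising workaround is to introduce an abstract lexicographic order on block-patterns and encode the matching as a canonical rewriting system, so that entire families of patterns are paired uniformly by a single rule. A more conceptual alternative worth exploring is to bypass discrete Morse theory altogether by constructing a deformation retract or nerve-lemma equivalence $\Delta_k^t(C_n^p) \simeq \Delta_k^t(C_n^{p-1})$ valid whenever $n \geq (2k-1)p+1$; this would reduce the conjecture inductively in $p$ to the already-established case $p=2$ (Theorem~\ref{thm:total-cut-cycle} together with the portion of Conjecture~\ref{conj-Wn} proved in this paper), and would simultaneously explain the striking $p$-independence of the homotopy type in the stable range.
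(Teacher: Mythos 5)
The statement you are addressing is not proved in the paper at all: \Cref{conj:all} is left as an open conjecture, supported only by SageMath homology computations (\Cref{tab:total-3-cut}, \Cref{tab:total-4-cut}). Your proposal is likewise not a proof but a research plan, and the gap sits exactly where the difficulty of the conjecture lies: Step 3. You never construct the sequence of element matchings for general $p$ and $k$, nor verify that after them exactly one cell of dimension $n-2k$ remains unmatched; you only assert that this should happen and name a candidate critical cell. That assertion \emph{is} the conjecture. Your candidate $\sigma^*$ is a sensible guess --- for $p=2$, $k=3$ it reproduces the paper's critical cell $K_{n-7,n-3}=[n]\setminus\{1,n-7,n-5,n-3,n-1\}$ from the proof of \Cref{zjq-zwh-k=3} --- but as you yourself note, the case analysis that occupies the entire proof of \Cref{n=3k+2} already strains at $p=2$, and the ``canonical rewriting system'' that would tame the $2^p$ block shapes is only mentioned, not defined, let alone shown to be acyclic with a unique critical cell. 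A minor additional issue: your generalization of \Cref{lem:3.1} uses the bound $n-(2k-2)p-1$, whereas the natural analogue of the paper's bound $3k-2=(p+1)(k-1)+1$ would give co-cardinality $n-(p+1)(k-1)-1$; your weaker bound is likely still true but leaves many more unmatched faces for the (unwritten) matchings to absorb.

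Your fallback strategy also rests on a false premise. You propose to reduce inductively in $p$ to ``the already-established case $p=2$,'' but the paper does not establish the stable range for $p=2$: \Cref{thm:total-cut-cycle} is the case $p=1$, and the portion of \Cref{conj-Wn} proved here covers only $k=3$ (\Cref{zjq-zwh-k=3}) and $n=3k+1,\,3k+2$ for general $k$ (\Cref{zjq-syf-n=3k+1}); for $k\ge 4$ the range $n\ge 4k-1$ relevant to \Cref{conj:all} at $p=2$ remains open. Moreover, the proposed equivalence $\Delta_k^t(C_n^p)\simeq\Delta_k^t(C_n^{p-1})$ in the stable range is itself an unproven (and quite strong) claim, with no candidate good cover, nerve, or retraction supplied. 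So while your block parametrization and matching philosophy are faithful to the machinery of Sections~\ref{sec3:main} and~\ref{mainsec2}, the proposal does not close, or even substantially narrow, the gap between the proved cases and \Cref{conj:all}.
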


\section{Acknowledgments}

The authors thank the organizers of the 2025 PKU Algebraic Combinatorics Experience, where this work originated. We also thank Lei Xue, our mentor who provided us a lot of help.

We are also grateful to the anonymous referees for their careful reading of the paper.

\newpage

\renewcommand*{\bibfont}{\footnotesize}

    \bibliographystyle{apalike}
\end{document}